\tikzset{every loop/.style={min distance=10 mm, in=60, out=120, looseness=10}}
\newcommand{\bk}{\backslash}
\newcommand{\lm}{\lambda}
\newcommand{\Lm}{\Lambda}
\newcommand{\tm}{\widetilde{m}}
\newcommand{\Lift}{\textbf{\textsf{Lift}}}
\renewcommand{\epsilon}{\varepsilon}
\newtheorem{lemma}[]{Lemma}
\newtheorem{cor}[lemma]{Corollary}
\newtheorem{theorem}[lemma]{Theorem}
\newtheorem{definition}[lemma]{Definition}
\newtheorem{obs}[lemma]{Observation}
\title{Homogeneous Sets in Graphs and a \\ Chromatic Multisymmetric Function\footnote{Department of Combinatorics \& Optimization, University of Waterloo, Waterloo, ON, N2L 3G1.\newline  Emails:  lcrew@uwaterloo.ca, ehaithcock@uwaterloo.ca, jreynes@uwaterloo.ca, sspirkl@uwaterloo.ca. \newline
We acknowledge the support of the Natural Sciences and Engineering Research Council of Canada (NSERC), [funding
reference numbers RGPIN-2020-03912 and RGPIN-2022-03093]. \newline Cette recherche a \'et\'e financ\'ee par le Conseil de recherches en sciences naturelles et en g\'enie du Canada (CRSNG),
[num\'ero de r\'ef\'erence RGPIN-2020-03912 and RGPIN-2022-03093]. \\ This project was funded in part by the Government of Ontario.}}
\author{Logan Crew, Evan Haithcock, Josephine Reynes, Sophie Spirkl}
\date{\today}
\begin{document}

\maketitle

\begin{abstract}

In this paper, we extend the chromatic symmetric function $X$ to a \emph{chromatic $k$-multisymmetric function} $X_k$, defined for graphs equipped with a partition of their vertex set into $k$ parts. We demonstrate that this new function retains the basic properties and basis expansions of $X$, and we give a method for systematically deriving new linear relationships for $X$ from previous ones by passing them through $X_k$. 

In particular, we show how to take advantage of homogeneous sets of $G$ (those $S \subseteq V(G)$ such that each vertex of $V(G) \bk S$ is either adjacent to all of $S$ or is nonadjacent to all of $S$) to relate the chromatic symmetric function of $G$ to those of simpler graphs. Furthermore, we show how extending this idea to homogeneous pairs $S_1 \sqcup S_2 \subseteq V(G)$ generalizes the process used by Guay-Paquet to reduce the Stanley-Stembridge conjecture to unit interval graphs.
    
\end{abstract}

\section{Introduction}

The chromatic symmetric function $X_G$ of a graph $G$, introduced by Stanley approximately thirty years ago \cite{stanley}, has seen a recent resurgence of interest, with research focusing on generalizations, basis expansions, and its ability to distinguish graphs \cite{trees, huh, epos, dahl, foley, heil, pau, paw, wang}. In particular, a central driving conjecture in the field is the Stanley-Stembridge conjecture, which in its original form suggested that chromatic symmetric functions of incomparability graphs of $(3+1)$-free posets are $e$-positive. Substantial progress on this conjecture was made by Guay-Paquet in 2013 \cite{guay} by demonstrating a relation that expressed $X_G$ with $G$ the incomparability graph of a $(3+1)$-free poset as a convex combination of chromatic symmetric functions of incomparability graphs of posets that are simultaneously $(3+1)$-free and $(2+2)$-free, or equivalently unit interval graphs. Thus, the Stanley-Stembridge conjecture was reduced to showing the $e$-positivity of a smaller, well-studied graph class, and much recent work in the area has focused on this version of the conjecture \cite{abreu, per2022, cho2022, colm, dahlberg2019, matherne}.

In recent work by the first and last authors \cite{modular}, we extended work of Penagui\~{a}o \cite{raul} considering $X$ as a mapping from the Hopf algebra $\Gamma$ of vertex-labelled graphs to the space of symmetric functions $\Lambda$, and in doing so we gave a characterization of all local graph modifications (written as a linear combination of vertex-labelled induced graphs) that universally preserve the chromatic symmetric function. Notably, it is possible to show that Guay-Paquet's relation in \cite{guay} is not one of these, meaning that it depends on the particular structure of the incomparability graphs of $(3+1)$-free posets.

In this work, we define a further extension of the chromatic symmetric function to multiple sets of variables, also known as a \emph{multisymmetric function}. The different sets of variables will represent a partition of $V(G)$ into nonempty parts, where each part gets its own variable set. This allows us to generalize results of \cite{modular} and \cite{raul} and characterize further graph modifications that preserve the chromatic symmetric function. In particular, while not every linear combination $L$ of chromatic symmetric functions lying in the kernel of the map $X$ represents a universal graph modification, we show that every such linear combination \emph{does} naturally give rise to a family of graphs within which $L$ always represents such a graph modification.

In particular, the chromatic multisymmetric function captures the importance of \emph{homogeneous partitions} of a graph $G$, meaning partitions $V(G) = V_1 \sqcup \dots \sqcup V_k \sqcup W$ such that for every vertex $w \in W$ and every $i$, either $w$ is adjacent to every vertex of $V_i$, or no vertex of $V_i$. The notion of homogeneous partitions occurs naturally in structural graph theory; the particular case of homogeneous pairs (where $k=2$) occurs in the original form of the decomposition theorem that underlies the famous proof of the Strong Perfect Graph Theorem by Chudnovsky, Robertson, Seymour, and Thomas \cite{perfect}, while homogeneous pairs of cliques (where $V_1$ and $V_2$ are both cliques) play a vital role in the structure theorem of Chudnovsky and Seymour \cite{chud} for claw-free graphs, which in particular include all incomparability graphs of $(3+1)$-free posets.

As an example, note that if a poset is not $(2+2)$-free, then its incomparability graph contains an induced four-vertex cycle, or $C_4$. We show how Guay-Paquet's relation reducing the Stanley-Stembridge conjecture \cite{guay} may be viewed naturally in terms of chromatic multisymmetric functions, as it takes advantage of the nontrivial fact (implicitly proved by Guay-Paquet's structure theorem with Morales and Rowland in \cite{clawposet}, and directly proved in Section 5.3) that if $G$ is the incomparability graph of a poset that is $(3+1)$-free but not $(2+2)$-free, then for each induced $C_4$ in $G$, there exists a homogeneous pair of cliques such that each clique contains two vertices of the $C_4$. Guay-Paquet used this to his advantage in \cite{guay} by finding an appropriate local relation on the subgraph induced by these cliques to show that the chromatic symmetric function of the original graph is equal  to a convex combination of chromatic symmetric functions of graphs in which the $C_4$ is eliminated.

This paper is organized as follows: in Section 2, we introduce the notation, terminology, and basic ideas needed from symmetric function theory and graph theory. In Section 3, we introduce \emph{$k$-vertex-labelled graphs}, defined by labelling the vertices of a graph $G$ with one of $k$ labels, and thus inducing a partition of its vertex set $V(G)$ into $k$ parts. We extend the definition of the chromatic symmetric function $X$ to a \emph{chromatic $k$-multisymmetric function} $X_k$ on such partitioned graphs, and demonstrate that this function has properties and basis expansions naturally generalizing those of $X$. 

In Section 4, we build on \cite{modular} and \cite{raul} by characterizing the kernel of the functions $X_k$. Then in Section 5 we determine the algebraic relationships between different $X_k$, including that elements in $Ker(X_k)$ may be projected to elements of $Ker(X)$, or in some cases lifted to elements of $Ker(X_{k+1})$. We put this all together to show how to derive further elements of $Ker(X)$ from a given one (and thus better describe how graphs can have equal chromatic symmetric function) in a systematic way, and show examples from the literature that can be recovered in this manner. Finally, in Section 6 we provide some further possible directions for research.

\section{Background}

\subsection{Fundamentals of Partitions and Symmetric Functions}

 A \emph{set partition} of a set $S$ is a collection of nonempty, pairwise nonintersecting \emph{blocks} $B_1, \dots, B_k$ satisfying $B_1 \cup \dots \cup B_k = S$. We will specify that a union of blocks is a set partition by writing $\sqcup$ for disjoint union, using the notation $B_1 \sqcup \dots \sqcup B_k$.
 
 An \emph{integer partition} is a tuple $\lm = (\lm_1,\dots,\lm_k)$ of positive integers such that $\lm_1 \geq \dots \geq \lm_k$.  The integers $\lm_i$ are the \emph{parts} of $\lm$.  If $\sum_{i=1}^k \lm_i = n$, we say that $\lm$ is a partition of $n$. The number of parts equal to $i$ in $\lm$ is given by $n_i(\lm)$.
 
 We may use simply \emph{partition} to refer to either a set or integer partition. We write $\pi \vdash S$ or $\lm \vdash n$ to mean respectively that $\pi$ is a partition of $S$, and $\lm$ is a partition of $n$, and we write $|\lm| = |\pi| = n$. The number of blocks or parts is the \emph{length} of a partition, and is denoted by $l(\pi)$ or $l(\lm)$. When $\pi$ is a set partition, we will write $\lm(\pi)$ to mean the integer partition whose parts are the sizes of the blocks of $\pi$.

  A function $f(x_1,x_2,\dots) \in \mathbb{C}[[x_1,x_2,\dots]]$ is \emph{symmetric}\footnote{The choice of coefficient ring is irrelevant for the work in this paper so long as it is a field of characteristic $0$.} if $f(x_1,x_2,\dots) = f(x_{\sigma(1)},x_{\sigma(2)},\dots)$ for every permutation $\sigma$ of the positive integers $\mathbb{N}$.  The \emph{algebra of symmetric functions} $\Lambda$ is the subalgebra of $\mathbb{C}[[x_1,x_2,\dots]]$ consisting of those symmetric functions $f$ that are of bounded degree (that is, there exists a positive integer $n$ such that every monomial of $f$ has degree $\leq n$).  Furthermore, $\Lambda$ is a graded algebra, with natural grading
  $$
  \Lambda = \bigoplus_{d=0}^{\infty} \Lambda^d
  $$
  where $\Lambda^d$ consists of symmetric functions that are homogeneous of degree $d$. For more on the basics of symmetric function theory see \cite{mac,stanleybook}.

  Each $\Lambda^d$ is a finite-dimensional vector space over $\mathbb{C}$, with dimension equal to the number of integer partitions of $d$ (and thus, $\Lambda$ is an infinite-dimensional vector space over $\mathbb{C}$).  Some commonly-used bases of $\Lambda$ that are indexed by partitions $\lm = (\lm_1,\dots,\lm_k)$ include:
\begin{itemize}
  \item The monomial symmetric functions $m_{\lm}$, defined as the sum of all distinct monomials of the form $x_{i_1}^{\lm_1} \dots x_{i_k}^{\lm_k}$ with distinct indices $i_1, \dots, i_k$.

  \item The power-sum symmetric functions, defined by the equations
  $$
  p_n = \sum_{k=1}^{\infty} x_k^n, \hspace{0.3cm} p_{\lm} = p_{\lm_1}p_{\lm_2} \dots p_{\lm_k}.
  $$
  \item The elementary symmetric functions, defined by the equations
  $$
  e_n = \sum_{i_1 < \dots < i_n} x_{i_1} \dots x_{i_n}, \hspace{0.3cm} e_{\lm} = e_{\lm_1}e_{\lm_2} \dots e_{\lm_k}.
  $$
\end{itemize}

  We also make use of the \emph{augmented monomial symmetric functions}, defined by 
  $$
  \tm_{\lm} = \left(\prod_{i=1}^{\infty} n_i(\lm)!\right)m_{\lm}.
  $$
  
  Given a symmetric function $f$ and a basis $b$ of $\Lambda$, we say that $f$ is \emph{$b$-positive} if when we write $f$ in the basis $b$, all coefficients are nonnegative.
  
  \subsection{Fundamentals of Graphs and Colorings}
  
  We use standard graph terminology as in \cite{modular}.

  A \emph{graph} $G = (V,E)$ consists of a \emph{vertex set} $V$ and an \emph{edge multiset} $E$ where the elements of $E$ are (unordered) pairs of (not necessarily distinct) elements of $V$. Given an edge $e \in E$, its two vertices are called its \emph{endpoints}. An edge $e \in E$ that contains the same vertex twice is called a \emph{loop}. If there are two or more edges that each contain the same two vertices, they are called \emph{multi-edges}. A graph is called \emph{simple} if its edge multiset contains no loops or multi-edges. 
  
  Given a graph $G = (V,E)$ and $S \subseteq V$, let $E|_S$ denote the set of edges of $G$ with both endpoints in $S$. The graph $G|_S = (S, E|_S)$ is called the subgraph of $G$ \emph{induced} by $S$. A graph $H$ is said to be an \emph{induced subgraph} of $G$ if there exists a set $S \subseteq V$ such that $H$ is isomorphic to $G|_S$, and in this case we say that $G|_S$ is an \emph{induced} (copy of) $H$ in $G$. If $H$ is not an induced subgraph of $G$, we say that $G$ is \emph{$H$-free}.
  
  A \emph{complete graph} is a simple graph such that for each pair of distinct vertices $u,v \in V(G)$, $uv \in E(G)$. Given a simple graph $G$, its \emph{complement} $\overline{G}$ is the graph $V(G), \overline{E(G)}$ where for each pair of distinct vertices $u,v \in V(G)$, we have $uv \in \overline{E(G)} \iff uv \notin E(G)$. Given graphs $G$ and $H$, the \emph{disjoint union} $G \sqcup H$ is equal to $(V(G) \sqcup V(H), E(G) \sqcup E(H))$.
  
  Given $A, B \subseteq V(G)$ with $A \cap B = \emptyset$, we say that $B$ is \emph{complete to $A$} if for every $a \in A$ and $b \in B$, $ab \in E(G)$. We say that $B$ is \emph{anticomplete to $A$} if for every $a \in A$ and $b \in B$, $ab \notin E(G)$.

  Given a graph $G$, there are two commonly used operations that produce new graphs. One is \emph{deletion}: given an edge $e \in E(G)$, the graph of $G$ \emph{with} $e$ \emph{deleted} is the graph $G' = (V(G), E(G) \bk \{e\})$, and is denoted $G \bk e$ or $G-e$. Likewise, if $S$ is a multiset of edges, we use $G \bk S$ or $G-S$ to denote the graph $(V(G),E(G) \bk S)$.

  The other operation is the \emph{contraction of an edge} $e = v_1v_2$, denoted $G / e$.  If $v_1 = v_2$ ($e$ is a loop), we define $G / e = G \bk e$.  Otherwise, we create a new vertex $v^*$, and define $G / e$ as the graph $G'$ with $V(G') = (V(G) \bk \{v_1,v_2\}) \cup v^*$, and $E(G') = (E(G) \bk E(v_1, v_2)) \cup E(v^*)$, where $E(v_1,v_2)$ is the set of edges with at least one of $v_1$ or $v_2$ as an endpoint, and $E(v^*)$ consists of each edge in $E(v_1,v_2) \bk \{e\}$ with the endpoint $v_1$ and/or $v_2$ replaced with the new vertex $v^*$.  Note that this is an operation on a graph that identifies two vertices while keeping and/or creating multi-edges and loops.

  Let $G = (V(G),E(G))$ be a graph. A map $\kappa: V(G) \rightarrow \mathbb{N}_{>0}$ is called a \emph{coloring} of $G$. This coloring is called \emph{proper} if $\kappa(v_1) \neq \kappa(v_2)$ for all $v_1,v_2$ such that there exists an edge $e = v_1v_2$ in $E(G)$. The \emph{chromatic symmetric function} $X_G$ of $G$ is defined as \cite{stanley}
  
  $$X_G(x_1,x_2,\dots) = \sum_{\kappa \text{ proper}} \prod_{v \in V(G)} x_{\kappa(v)} = \sum_{\pi \text{ stable}} \tm_{\lm(\pi)}$$ 
  where the first sum ranges over all proper colorings $\kappa$ of $G$, the second sum ranges over all (set) partitions $\pi$ of $V(G)$ into stable sets, and $\lm(\pi)$ is the integer partition whose parts are \\ $\{|\pi_i|: \pi_i \text{ is a block of } \pi\}$. Note that if $G$ contains a loop then $X_G = 0$, and that $X_G$ is unchanged by replacing each multi-edge by a single edge. 
  
\subsection{Vertex-Weighted Graphs and their Colorings}

A \emph{vertex-weighted graph} $(G,w)$ consists of a graph $G$ and a weight function $w: V(G) \rightarrow \mathbb{N}_{>0}$. For $S \subseteq V(G)$, denote $w(S) = \sum_{v \in S} w(v)$. 


Given a vertex-weighted graph $(G,w)$, if $e = v_1v_2$ is a non-loop edge, we define the contraction of $G$ by e to be the graph $(G/e,w/e)$, where $w/e$ is the weight function such that $(w/e)(v) = w(v)$ if $v$ is not the vertex $v^*$ arising from the contraction, and $(w/e)(v^*) = w(v_1) + w(v_2)$ (if $e$ is a loop, we define $w/e = w$, so $(G/e,w/e) = (G \bk e, w)$).

The chromatic symmetric function may be extended to vertex-weighted graphs as
$$
X_{(G,w)} = \sum_{\kappa \text{ proper}} \prod_{v \in V(G)} x_{\kappa(v)}^{w(v)} = \sum_{\pi \text{ stable}} \tm_{\lm(\pi)}
$$
where again the sum ranges over all proper colorings $\kappa$ of $G$, and $\lm(\pi)$ is the integer partition whose parts are $\{w(\pi_i): \pi_i \text{ is a block of } \pi\}$. In this setting the chromatic symmetric function admits the deletion-contraction relation \cite{delcon}
\begin{equation}\label{eq:delcon}
X_{(G,w)} = X_{(G \bk e, w)} - X_{(G/e,w/e)}.
\end{equation}

\section{Extending $X_{(G,w)}$ to a Multisymmetric Function}

Previous work \cite{delcon} has dealt with extending $X_G$ to vertex-weighted graphs using positive integer weights in order to express a deletion-contraction relation for the chromatic symmetric function. Here, we make a further extension to allow for graphs whose weights are \emph{tuples} of nonnegative integers to allow us to systematically describe a family of chromatic symmetric function relations including that of Guay-Paquet \cite{guay}. To do so, we need to introduce and describe the vector space of \emph{multisymmetric functions}. We describe only the results we need here; for more information see the foundational works of Dalbec \cite{dalbec} and Vaccarino \cite{vaccarino2005ring}.

\subsection{Multisymmetric Functions}

\begin{definition}[\cite{dalbec, vaccarino2005ring}]
    Let $k$ be a fixed positive integer, and for $i = 1, \dots, k$, let $X^i = \{(x_1)_i, (x_2)_i, \dots\}$ be a set of countably many commuting indeterminates. A function $f \in \mathbb{C}[X^1, \dots, X^k]$ is \emph{multisymmetric} if for all $\sigma: \mathbb{Z}^+ \rightarrow \mathbb{Z}^+$, $f$ is unchanged by replacing each $(x_i)_j$ by $(x_{\sigma(i)})_j$ (that is, $f$ is fixed under the diagonal action of $S_{\mathbb{Z}^+}$ on the $k$ variable sets simultaneously). 
    
    \medskip
    
    We denote the vector space of multisymmetric functions in $k$ sets of variables (or $k$-multisymmetric functions) by $\Lm_k$.
\end{definition}

As a vector space, there is a natural grading 
$$
\Lm_k = \bigoplus_{i=0}^{\infty} \Lm_k^i
$$
where $\Lm_k^i$ consists of those $k$-multisymmetric functions that are homogeneous of total degree $i$. This may be further decomposed as
$$
\Lm_k^i = \bigoplus_{(i_1, \dots, i_k)} \Lm_k^{(i_1, \dots, i_k)}
$$
where the direct sum ranges over all elements $(i_1, \dots, i_k)$ of $\mathbb{Z}_{\geq 0}^k$ such that $\sum_j i_j = i$ (in other words, all weak compositions of $i$ with $k$ parts), and $\Lm_k^{(i_1, \dots, i_k)}$ is the vector space of $k$-multisymmetric functions in which every monomial has total degree $i_j$ in the variable set $X^j$.

Many symmetric function bases have analogues in multisymmetric functions. Where we index basis elements of $\Lambda^i$ with a multiset of positive integers summing to $i$, we index basis elements of $\Lambda_k^i$ with a multiset of \emph{ordered $k$-tuples of nonnegative integers} (where each tuple has at least one positive coordinate) such that the sum of all coordinates of all tuples sums to $i$. We call such multisets \emph{$k$-tuple partitions}. Where we list integer partitions with their parts in decreasing order, we will use the notation $\lm^k$ to denote a generic $k$-tuple partition $\lm^k = (\lm_1^k, \dots, \lm_l^k)$ with $\lm_1^k \geq \dots \geq \lm_l^k$, where each $\lm_i^k$ is an element of $\mathbb{Z}_{\geq 0}^k \bk \{0^k\}$ and $\geq$ is the reverse lexicographic order. For instance, a basis element of $\Lambda^5$ might be indexed by $(4,1)$ or $(2,2,1)$, whereas a basis element of $\Lambda_2^5$ might be indexed by $((2,2),(1,0))$, or $((1,1),(1,1),(0,1))$, and a basis element of $\Lambda_3^5$ might be indexed by $((2,2,0),(0,1,0))$ or $((2,2,1))$.

Furthermore, $\Lambda_k^{(i_1, \dots, i_k)}$ has basis elements indexed by $k$-tuple partitions such that the componentwise sum of all of the tuples is $(i_1, \dots, i_k)$, so for example $((2,2,0),(0,1,0))$ would index a basis element of $\Lambda_3^{(2,3,0)}$.  In analogy with usual symmetric functions, we define $|\lm^k| = \sum_i \lm_i^k$ (note that this is a $k$-tuple), $||\lm^k||$ is the sum of all integers in all $\lm_i^k$, $l(\lm^k)$ is the number of tuples of $\lm^k$, and for any $k$-tuple $\alpha$, $n_{\alpha}(\lm^k)$ is the multiplicity of $\alpha$ as a tuple of $\lm^k$.

For example $((1,2), (1,1), (0,2))$ is a $2$-tuple partition, and we have $|((1,2), (1,1), (0,2))| = (2,5)$ and $||((1,2), (1,1), (0,2))|| = 7$. We also have $l((1,2), (1,1), (0,2)) = 3$, and \\ $n_{(1,2)}((1,2),(1,1),(0,2)) = 1$, while $n_{(1,0)}((1,2),(1,0),(1,0),(0,1)) = 2$.

Throughout this paper, we will use the shorthand $x_i^{(j_1, \dots, j_k)} \cong ((x_i)_1)^{j_1} \dots ((x_i)_k)^{j_k}$. We will often use $\alpha \in \mathbb{Z}_{\geq 0}^k \bk \{0^k\}$ to denote a $k$-tuple, and we let $\epsilon_i^k$ denote the particular $k$-tuple with $i^{th}$ coordinate equal to $1$, and all others equal to $0$ (the superscript $k$ may be dropped when it is clear from context).

The following functions indexed by $k$-tuple partitions each give bases of $\Lm_k^{(i_1, \dots, i_k)}$ when taken over all $k$-tuple partitions $\lm^k$ such that $|\lm^k| = (i_1, \dots, i_k)$ \cite{dalbec}:

\begin{itemize}
  \item The \emph{monomial $k$-multisymmetric functions} $m_{\lm^k}$, defined as the sum of all distinct monomials of the form $x_{i_1}^{\lm_1^k} \dots x_{i_l}^{\lm_l^k}$ with distinct indices $i_1, \dots, i_l$.  For example, if $k = 2$ we have
  $$
  m_{((1,2),(0,1))} = \sum_{i \neq j} (x_i^{(1,2)}x_j^{(0,1)}) = \sum_{i \neq j} (x_i)_1(x_i)_2^2(x_j)_2.
  $$
  
  \item The \emph{augmented} monomial $k$-multisymmetric functions $\tm_{\lm^k}$, defined by
  $$
  \tm_{\lm^k} = \left(\prod_{\alpha} n_{\alpha}(\lm^k)!\right) m_{\lm^k}.
  $$

  \item The \emph{power-sum $k$-multisymmetric functions}, defined by the equations
  $$
  p_{((i_1, \dots, i_k))} = \sum_{j=1}^{\infty} x_j^{(i_1, \dots, i_k)}, \hspace{0.3cm} p_{\lm^k} = p_{\lm_1^k}p_{\lm_2^k} \dots p_{\lm_l^k}.
  $$
  
  For example,
  $$
  p_{((1,2),(0,1))} = \left(\sum_i (x_i)_1(x_i)_2^2\right)\left(\sum_j (x_j)_2\right).
  $$
  \item  The \emph{elementary $k$-multisymmetric functions}, defined by the equations
  $$
  e_{((i_1, \dots, i_k))} = \tm_{{\epsilon_1}^{i_1} \dots {\epsilon_k}^{i_k}}, \hspace{0.3cm} e_{\lm^k} = e_{\lm_1^k}e_{\lm_2^k} \dots e_{\lm_l^k}.
  $$
  Where $\epsilon_j^{i_j}$ means $i_j$ copies of $\epsilon_j$. For example,
  $$
  e_{((1,2),(0,1))} = (\tm_{((1,0),(0,1),(0,1))})(\tm_{((0,1))}) = \left(2 \sum_{i_1,i_2,i_3 \text{ distinct}} (x_{i_1})_1(x_{i_2})_2(x_{i_3})_2\right)\left(\sum_j (x_j)_2\right).
  $$
\end{itemize}

\subsection{Chromatic Multisymmetric Functions of Weighted Graphs}

We will now extend the chromatic symmetric function of integer-weighted graphs given in \cite{delcon} to graphs where the vertex weights may be (non-zero) $k$-tuples of nonnegative integers:

\begin{definition}
    A \emph{tuple-weighted graph} $(G,w,k)$ consists of a graph $G$, and a weight function $w: V(G) \rightarrow \mathbb{Z}_{\geq 0}^k \setminus \{0^k\}$.
\end{definition}

\begin{definition}\label{def:gwk}
    The \emph{tuple-weighted chromatic symmetric function} of $(G,w,k)$ is defined as
    $$
     X_{(G,w,k)} = \sum_{\kappa \textnormal{ proper}} \prod_{v \in V(G)} x_{\kappa(v)}^{w(v)}.
    $$
\end{definition}

Note that despite the using near-identical notation for simplicity, the input $k$ means that this is a $k$-multisymmetric function. In order for this definition to be consistent with previous work, we use the convention that if $k$ is not given it is assumed to be $1$, in which case it is easy to verify this is just the previously-described integer-weighted chromatic symmetric function.

Before going further, the reader will naturally wonder what the motivation is for adding more variable sets to the function. The answer is that to capture the full power of certain local relationships of $X$, it is desirable for the function to have some way of detecting certain distinguished subsets of $V(G)$ in a graph $G$.

For example, in his work reducing the Stanley-Stembridge conjecture, Guay-Paquet \cite{guay} uses a chromatic symmetric function relation that holds only in graphs which have a \emph{homogeneous pair} $V_1$ and $V_2$ of cliques, meaning that $V_1$ and $V_2$ are each separately either complete or anticomplete to $V(G) \bk (V_1 \cup V_2)$. In such graphs, given a coloring of $V_1 \cup V_2$, permuting the vertices of either $V_1$ or $V_2$ does not affect how the remainder of the graph may be colored, so the effect of applying certain local graph modifications on $X_{G|_{V_1 \cup V_2}}$ can be extended naturally to examine the effect of applying the same modifications on $X_G$. This is implicitly used by Guay-Paquet in the proof reducing the Stanley-Stembridge conjecture to unit interval graphs, but is not able to be directly captured by the chromatic symmetric function. We will discuss this in greater detail in Section 5.3.

To address this idea, the motivation is that each of $V_1$ and $V_2$ should have its own variable set (and the remainder of the graph its own variable set), so that we may identify the portions of chromatic symmetric function monomials arising from $V_1$ and $V_2$. Thus, if $X^1$ and $X^2$ are the variable sets corresponding to $V_1$ and $V_2$ respectively, with $X^3$ corresponding to the remainder of the graph, in the case of ``unweighted" graphs (those in which each vertex $v$ has weight satisfying $||w(v)|| = 1$), a vertex of weight $(1,0,0)$ lies in $V_1$, a vertex of weight $(0,1,0)$ lies in $V_2$, and one of weight $(0,0,1)$ lies in $V(G) \bk (V_1 \cup V_2)$. Additionally, in these graphs a variable $x_i^{(i_1,i_2,i_3)}$ occurring in a chromatic symmetric function monomial tells us how many vertices receive the color $i$ in each of those three parts.

With this motivation in mind, we shall now show that many properties of the integer-weighted $X_{(G,w)}$ extend to the tuple-weighted $X_{(G,w,k)}$. First, note that as in the case of $X_{(G,w)}$ (see \cite{delcon}), many classical multisymmetric function bases may be written as chromatic multisymmetric functions of certain graphs:
\begin{itemize}
    \item If $K^{\lm^k}$ is the complete graph with vertices of weights $\lm_1^k, \dots, \lm_l^k$, then $X_{K^{\lm^k}} = \tm_{\lm^k}$.
    \item If $\overline{K^{\lm^k}}$ is the complement of the above graph then \begin{equation}\label{eq:pbasis}
        X_{\overline{K^{\lm^k}}} = p_{\lm^k}.
    \end{equation}
    \item If $\alpha$ is a $k$-tuple of nonnegative integers not all equal to zero, and $G^{\alpha} = K^{\epsilon_1^{\alpha_1}, \dots, \epsilon_k^{\alpha_k}}$ (where as before $\epsilon_j^{\alpha_j}$ means $\alpha_j$ copies of $\epsilon_j$), then $X_{G^{\alpha}} = e_{\alpha}$, and thus if $G^{\lm^k} = \bigsqcup_i G^{\lm_i^k}$ then $X_{G^{\lm^k}} = e_{\lm^k}$.
\end{itemize}

For each of the following theorems, we also note the corresponding result for the integer-weighted chromatic symmetric function and (where it exists) for the usual chromatic symmetric function.

\begin{theorem}
    If $(G,w,k)$ is a vertex-weighted graph, and $e$ is an edge of $G$, then
    \begin{equation}\label{eq:delcon}    X_{(G,w,k)} = X_{(G-e,w,k)} - X_{(G/e,w/e,k)}
    \end{equation}
    where $w/e = w$ if $e$ is a loop of $G$, and otherwise if $e = v_1v_2$ and the newly-formed vertex is $v^*$, we have $(w/e)(v) = w(v)$ for $v \neq v^*$, and $(w/e)(v^*) = w(v_1)+w(v_2)$ where addition is componentwise.
    \label{theorem:delcon}
\end{theorem}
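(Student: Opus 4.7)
The plan is to mimic the classical proof of deletion-contraction for the chromatic polynomial, adapted to this weighted tuple-setting. The key observation is that proper colorings of $G-e$ split into two disjoint classes depending on whether the two endpoints of $e$ receive the same color, and that each class has a clean monomial interpretation because the weight exponents add componentwise in $\mathbb{Z}_{\geq 0}^k$.

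First I would dispose of the loop case. If $e$ is a loop, then every proper coloring of $G$ would have to assign two different colors to the single endpoint of $e$, which is impossible, so $X_{(G,w,k)} = 0$. By the convention stated in the theorem, $(G/e,w/e,k) = (G - e, w, k)$, so the right-hand side equals $X_{(G-e,w,k)} - X_{(G-e,w,k)} = 0$, matching the left-hand side.

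Now assume $e = v_1 v_2$ is a non-loop edge and write $v^*$ for the vertex of $G/e$ obtained by identifying $v_1$ and $v_2$. Partition the set of proper colorings of $G-e$ into $\mathcal{A} = \{\kappa : \kappa(v_1) \neq \kappa(v_2)\}$ and $\mathcal{B} = \{\kappa : \kappa(v_1) = \kappa(v_2)\}$. A coloring in $\mathcal{A}$ is precisely a proper coloring of $G$, since it is proper on all edges of $G-e$ and additionally respects $e$; conversely every proper coloring of $G$ lies in $\mathcal{A}$. Hence the contribution of $\mathcal{A}$ to $X_{(G-e,w,k)}$ is exactly $X_{(G,w,k)}$.

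The main (and only real) point is to identify $\mathcal{B}$ with proper colorings of $G/e$ at the level of monomials. Define $\Phi:\mathcal{B} \to \{\text{proper colorings of }G/e\}$ by $\Phi(\kappa)(v) = \kappa(v)$ for $v \neq v^*$ and $\Phi(\kappa)(v^*) = \kappa(v_1) = \kappa(v_2)$. This is a bijection: any edge of $G/e$ corresponds to an edge of $G-e$ (possibly with an endpoint in $\{v_1,v_2\}$ replaced by $v^*$), so propriety is preserved in both directions, and the inverse is obtained by setting $\kappa(v_1) = \kappa(v_2) = \Phi(\kappa)(v^*)$. Under $\Phi$, the monomial contribution transforms as
\[
\prod_{v \in V(G)} x_{\kappa(v)}^{w(v)} = x_{\kappa(v_1)}^{w(v_1)} x_{\kappa(v_2)}^{w(v_2)} \prod_{v \neq v_1, v_2} x_{\kappa(v)}^{w(v)} = x_{\Phi(\kappa)(v^*)}^{w(v_1)+w(v_2)} \prod_{v \neq v^*} x_{\Phi(\kappa)(v)}^{w(v)},
\]
since exponents in $\mathbb{Z}_{\geq 0}^k$ combine by componentwise addition, which matches $(w/e)(v^*) = w(v_1)+w(v_2)$. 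Thus the contribution of $\mathcal{B}$ to $X_{(G-e,w,k)}$ equals $X_{(G/e,w/e,k)}$, and rearranging gives the claimed identity. No step is hard; the only thing to be careful about is that the exponent bookkeeping works tuplewise, which it does because multiplication of monomials adds the tuple exponents coordinate by coordinate.
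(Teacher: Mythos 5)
Your proof is correct and follows essentially the same route as the paper: both arguments partition the proper colorings of $G-e$ according to whether $\kappa(v_1)=\kappa(v_2)$, match the first class with proper colorings of $G$ and the second with proper colorings of $G/e$, and verify that the tuple exponents combine componentwise so the monomials agree. Your treatment of the loop case is slightly more explicit than the paper's (which dismisses it as trivial), but the substance is identical.
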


\begin{proof}
    This proof is very similar to that of \cite[Lemma 2]{delcon}. We will show that for every choice of $e\in E(G)$ there is a bijection between the set of proper colorings of $G-e$ and the set of proper of colorings of either $G$ or $G/e$. To show this we demonstrate that each proper coloring of $G-e$ corresponds to a proper coloring of either $G/e$ or $G$, but not both. 
    
    Let $e=v_1v_2$. If $v_1=v_2$, then $e$ is a loop and the statement is trivial. 
    Suppose that $v_1\neq v_2$ and $\kappa$ is a proper coloring of $(G-e,w,k)$. Let $v^*$ be the label of the vertex obtained by contracting $e$. We consider two cases, either $\kappa(v_1)=\kappa(v_2)$ or $\kappa(v_1)\neq\kappa(v_2)$.

    If $\kappa(v_1)=\kappa(v_2)$, then this coloring does not correspond to proper coloring of $(G,w,k)$. However it corresponds to a proper coloring $\kappa'$ of $(G/e,w/e,k)$ with $\kappa'(v^*)=\kappa(v_1)=\kappa(v_2)$ and $\kappa'(u)=\kappa(u)$ for all $u\in  V(G/e)\bk \{v^*\}$ . By construction $(w/e)(v^*)=w(v_1)+w(v_2)$. Notice that $x_{\kappa(v_1)}^{w(v_1)}x_{\kappa(v_2)}^{w(v_2)}=x_{\kappa(v_1)}^{w(v_1)}x_{\kappa(v_1)}^{w(v_2)}=x_{\kappa'(v^*)}^{w(v_1)+w(v_2)}=x_{\kappa'(v^*)}^{(w/e)(v^*)}$ and thus the corresponding monomials from $\kappa$ and $\kappa'$ are equal in the chromatic multisymmetric function. 
    
    If $\kappa(v_1)\neq\kappa(v_2)$, then $\kappa$ is a proper coloring of $(G,w,k)$ and $(G-e,w,k)$, contributing the same term to each chromatic multisymmetric function as they have the same weight functions, but does not correspond to a proper coloring of $(G/e,w/e,k)$. 
    
    In either case there exists a corresponding proper coloring of either $G/e$ or $G$, but not both, and this coloring yields same term in the chromatic multisymmetric function. Note that this gives a one-to-one correspondence between proper colorings of $G-e$ and the set of proper colorings of $G$ and $G/e$.

    Thus $X_{(G,w,k)}+X_{(G/e,w/e,k)}=X_{(G-e,w,k)}$ and the result holds.

\end{proof}

\begin{lemma}\label{lem:mbasis}

$$
X_{(G,w,k)} = \sum_{\substack{\pi \vdash V(G) \\ \pi \textnormal{ stable}}} \tm_{\lm^k(\pi)}
$$
where $\lm^k(\pi)$ is the $k$-tuple partition whose parts are the total weights of the blocks of $\pi$.

\end{lemma}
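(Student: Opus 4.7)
The plan is to imitate the standard proof of the corresponding identity for the ordinary chromatic symmetric function, reorganizing the defining sum for $X_{(G,w,k)}$ by grouping proper colorings according to the stable partition of $V(G)$ that they induce. Every proper coloring $\kappa$ of $G$ determines a stable partition $\pi = \{\kappa^{-1}(c) : c \in \kappa(V(G))\}$ of $V(G)$. Conversely, given a stable partition $\pi = \{B_1,\dots,B_l\}$ with a fixed labelling of its blocks, together with an injective function $\phi \colon \{1,\dots,l\} \to \mathbb{Z}^+$, the assignment sending every $v \in B_i$ to the color $\phi(i)$ is a proper coloring of $G$ inducing $\pi$. Under this bijection, the monomial contributed by $\kappa$ to $X_{(G,w,k)}$ collapses across each block to
$$\prod_{v \in V(G)} x_{\kappa(v)}^{w(v)} = \prod_{i=1}^{l} x_{\phi(i)}^{w(B_i)},$$
where $w(B_i) = \sum_{v \in B_i} w(v)$ is the componentwise sum of the weight tuples of the vertices of $B_i$.

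The main step is then to identify, for each fixed stable $\pi$, the inner sum
$$\sum_{\phi \text{ injective}} \prod_{i=1}^{l} x_{\phi(i)}^{w(B_i)}$$
as $\tm_{\lm^k(\pi)}$. Writing $\lm^k = \lm^k(\pi)$, this is a purely combinatorial claim: a monomial appearing in $m_{\lm^k}$ corresponds to a choice of distinct indices and an exponent assignment on those indices whose multiset of values equals $\{\lm_1^k,\dots,\lm_l^k\}$, and the number of injective tuples $\phi$ producing any such monomial equals the number of bijections between $\{1,\dots,l\}$ and the chosen support that respect the exponent tuple. Partitioning by common tuple value $\alpha$, this count factors as $\prod_\alpha n_\alpha(\lm^k)!$, which is precisely the scaling factor in the definition of $\tm_{\lm^k}$.

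Summing over all stable partitions $\pi$ of $V(G)$ then yields the claimed identity. The only nontrivial step is the multiplicity calculation above, which is the natural $k$-multisymmetric generalization of the classical identity underlying the $m$-basis expansion of $X_G$; everything else is a direct reorganization of the defining sum, together with the observation that block weights add componentwise when collapsing all vertices of a color class into a single monomial.
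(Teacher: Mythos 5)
Your proof is correct and takes essentially the same approach as the paper's: both group the proper colorings of $(G,w,k)$ by the stable partition they induce, observe that block weights add componentwise in the resulting monomial, and identify the inner sum over injective color assignments as $\tm_{\lm^k(\pi)}$ via the multiplicity factor $\prod_\alpha n_\alpha(\lm^k(\pi))!$ coming from blocks of equal weight tuple.
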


\begin{proof}

This proof is analogous to that of \cite[Lemma 1]{delcon}. First observe that given a proper coloring $\kappa$ of $(G,w,k)$ using $l$ distinct colors $j_1 < \dots < j_l$, $V(G)$ can be partitioned into $L_1,\dots, L_l$ such that for every $v\in V(G)$, $v\in L_i$ if and only if $\kappa(v)=j_i$. This is by definition a $k$-tuple partition $\pi=L_1\sqcup\dots\sqcup L_l$ of $V(G)$ into stable sets. So each proper coloring corresponds to a stable set partition. 

Using Definition \ref{def:gwk}, $\kappa$ corresponds to the monomial $x_{\kappa}=\prod_{v \in V(G)} x_{\kappa(v)}^{w(v)}$. Permuting the assignment of colors to the $L_i$, where colors are only permuted amongst $L_i$ of the same weight, produces all colorings corresponding to $x_{\kappa}$. Thus the number of proper colorings of $G$ that correspond to $\kappa$ is the number of ways to permute the parts of $\pi$ with the same weight. 

Summing over all colorings that give a distinct color to each part of $\pi$ yields a symmetric function of monomials of type $\lambda^k(\pi)$. This is an $m$-basis element of type $\lm^k(\pi)$ with the coefficient given by the number of colorings $\kappa$ that yield the set partition $\pi$. There are $\prod_{i=1}^\infty n_i(\lambda^k(\pi))!$ ways to permute the $L_i$ by weight and thus this many colorings for each stable set partition $\pi$. Since by definition $\tm_{\lm^k} = (\prod_{i=1}^\infty n_i(\lambda^k(\pi))!)m_{\lm^k}$, we have\
$$X_{(G,w,k)}=\sum_{\kappa \text{ proper}} \prod_{v \in V(G)} x_{\kappa(v)}^{w(v)}=\sum_{\substack{\pi \vdash V(G) \\ \pi \text{ stable}}} \tm_{\lm^k(\pi)}.$$

\end{proof}

\begin{lemma}\label{lem:pbasis}
$$
X_{(G,w,k)} = \sum_{S \subseteq E(G)} (-1)^{|S|}p_{\lm^k(S)}
$$
where $\lm^k(S)$ is the $k$-tuple partition whose parts are the total weights of the connected components of $(V(G),S)$ with vertex weighting $w$.

\end{lemma}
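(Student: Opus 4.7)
The plan is to mimic the classical proof of the power-sum expansion for the ordinary chromatic symmetric function via inclusion-exclusion on edges, now adapted to keep track of the tuple weights.

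First, for any coloring $\kappa : V(G) \to \mathbb{Z}_{>0}$ (not necessarily proper), I would expand the indicator that $\kappa$ is proper as
$$
[\kappa \textnormal{ proper}] = \prod_{e=v_1v_2 \in E(G)} \bigl(1 - [\kappa(v_1)=\kappa(v_2)]\bigr) = \sum_{S \subseteq E(G)} (-1)^{|S|} \prod_{e=v_1v_2 \in S} [\kappa(v_1) = \kappa(v_2)].
$$
Substituting this into Definition~\ref{def:gwk} and interchanging the sums over $\kappa$ and $S$ yields
$$
X_{(G,w,k)} = \sum_{S \subseteq E(G)} (-1)^{|S|} \sum_{\substack{\kappa : V(G) \to \mathbb{Z}_{>0} \\ \kappa \textnormal{ constant on components of } (V(G),S)}} \prod_{v \in V(G)} x_{\kappa(v)}^{w(v)}.
$$

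Next, I would observe that specifying a coloring constant on each connected component of $(V(G),S)$ is the same as independently choosing a color in $\mathbb{Z}_{>0}$ for each component. Letting $C_1, \dots, C_r$ denote the components of $(V(G),S)$ and $w(C_j) = \sum_{v \in C_j} w(v)$ their componentwise total weights, the inner sum factors as
$$
\prod_{j=1}^r \sum_{i \geq 1} x_i^{w(C_j)} = \prod_{j=1}^r p_{(w(C_j))} = p_{\lm^k(S)},
$$
directly from the definition of the power-sum $k$-multisymmetric functions and the fact that $\lm^k(S)$ is by definition the $k$-tuple partition with parts $w(C_1), \dots, w(C_r)$. Combining the two displays gives the claimed identity.

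I do not expect any serious obstacle here: the only point requiring care is that the interchange of the sums over $\kappa$ and $S$ is valid as a formal power series identity (each monomial $x_1^{\alpha_1} x_2^{\alpha_2} \cdots$ receives only finitely many contributions for a fixed $S$, and each $S$ contributes finitely), and that the factor obtained for each component $C_j$ really is the power-sum $p_{(w(C_j))}$ in the $k$-tuple sense defined earlier. As an alternative approach that matches the style used for Theorem~\ref{theorem:delcon}, one could instead induct on $|E(G)|$ using equation \eqref{eq:delcon}: split the sum over $S \subseteq E(G)$ according to whether $e \in S$, use the natural bijection $E(G) \bk \{e\} \leftrightarrow E(G/e)$, and invoke $(w/e)(v^*) = w(v_1) + w(v_2)$ to see that for $S$ containing $e$ the relevant component weights are preserved upon contraction; this yields the same conclusion but with more bookkeeping than the inclusion-exclusion route.
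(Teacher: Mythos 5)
Your argument is correct, but it takes a different route from the paper. You prove the identity directly by expanding the properness indicator as $\prod_{e=v_1v_2}\bigl(1-[\kappa(v_1)=\kappa(v_2)]\bigr)$ and interchanging the sums over colorings and edge subsets $S$ --- the classical Whitney-style inclusion-exclusion, adapted to tuple weights --- whereas the paper instead iterates the deletion-contraction relation of Theorem~\ref{theorem:delcon} over an ordering $f_1,\dots,f_m$ of the edges, producing $2^m$ terms indexed by the subsets $S$ of contracted edges; each resulting graph is edgeless with vertex weights equal to the component weights of $(V(G),S)$, so its chromatic multisymmetric function is $p_{\lm^k(S)}$ by equation~\eqref{eq:pbasis}. (This is precisely the ``alternative approach'' you sketch in your last paragraph, so you have in effect identified both proofs.) The trade-off is the usual one: your inclusion-exclusion argument is self-contained and makes the combinatorial meaning of each summand transparent (colorings constant on the components of $(V(G),S)$), while the paper's route leans on the already-established deletion-contraction machinery and on the identification $X_{\overline{K^{\lm^k}}}=p_{\lm^k}$, at the cost of some bookkeeping about which edges are contracted versus deleted. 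One point worth making explicit in your version is that $E(G)$ is an edge \emph{multiset} possibly containing loops; your expansion still works (a loop forces its factor $1-[\kappa(v_1)=\kappa(v_2)]$ to vanish, and repeated factors over multi-edges expand over sub-multisets exactly as the lemma's sum requires), but it deserves a sentence.
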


\begin{proof}
    This proof is an adaptation of the proof of \cite[Lemma 3]{delcon}. We begin by ordering the edges of $(G,w,k)$ as $f_1,f_2,...,f_m$. Now we apply Theorem \ref{theorem:delcon} repeatedly as follows. First we observe that $X_{(G,w,k)}=X_{(G-{f_1},w,k)}-X_{(G/f_1,w/f_1,k)}$ and we can apply the deletion-contraction to both $(G-f_1,w,k)$ and $(G/f_1,w/f_1,k)$ where we delete and contract $f_2$ to get a new equation for $X_{(G,w,k)}$ with four terms from of deletion and contraction of $f_1$ and then $f_2$. We repeat this process on each new term where edge $f_i$ is deleted and contracted from each term at the $i$th step from all $2^{i-1}$ terms is the equation for $X_{(G,w,k)}$. This process terminates after $m$ iterations. The final function will be of the form 
    \begin{equation*}
        X_{(G,w,k)}=\sum_{S\subseteq E(G)}(-1)^{|S|}X_{(G(S),w(S),k)}
    \end{equation*}
    where $(G(S),w(S),k)$ is the graph where the edges in $S$ are contracted and the edges in $E(G)\setminus S$ are deleted. Note that this graph has no edges and is thus the complement of a complete graph. For each vertex $v$ in $G(S)$ where $y_1, \dots y_m$ are the vertices of $G$ contracted to $v$, we have $w(v)=\sum_{y_i} w(y_i)$, so as given by equation \eqref{eq:pbasis}, we see that $X_{(G(S),w(S),k)}=p_{\lambda^k(S)}$ and thus the result holds. 

\end{proof}

\section{The Kernel of the Chromatic Multisymmetric Function}

In \cite{raul}, Penagui\~{a}o considered $X$ as a map from the Hopf algebra $\Gamma$ of vertex-labelled graphs to the space of symmetric functions. If $\Gamma$ is extended to include vertex-weighted graphs, then it is easy to verify that the kernel of the map from this space to symmetric functions is generated by vertex relabellings and the deletion-contraction relation. Penagui\~{a}o resolves the more difficult case of restricting $\Gamma$ to unweighted graphs by showing that in this case the kernel of $X$ of is generated by vertex relabellings and the triangular modular relation introduced by Orellana and Scott in \cite{ore}. 

In this section, we show that the kernel of the chromatic $k$-multisymmetric function on unweighted graphs is generated by a very similar set of relations. In the multisymmetric setting, unweighted graphs still have $k$ distinct types of vertices that can occur, one for each weight $\epsilon_i$ for $i = 1, \dots, k$; this can be viewed as representing a partition of $V(G)$ into $k$ nonempty blocks $V_1 \sqcup \dots \sqcup V_k$, where the vertices of weight $\epsilon_i$ are in block $i$. Thus, we work with formal linear combinations of graphs where the vertices are split into $k$ groups, and then each group separately is labelled. The following definition fixes how we will label a graph with $\alpha_i$ vertices of weight $\epsilon_i$ for each $i$.

\begin{definition}
    A \emph{$k$-vertex-labelled graph $G$} is one in which the vertices $V(G)$ are labelled by
    \begin{itemize}
        \item Choosing a $k$-tuple $(\alpha_1, \dots, \alpha_k)$ of nonnegative integers such that $\sum_i \alpha_i = |V(G)|$;
        \item Labelling the vertices $(i,j)$ where $i$ ranges from $1$ to $k$ (skipping those with $\alpha_i = 0$) and $j$ (depending on $i$) ranges from $1$ to $\alpha_i$; and
        \item Assigning the weight $\epsilon_i$ to vertices with first coordinate $i$. The set of vertices of weight $\epsilon_i$ will be denoted $V_i$, so $V = \bigsqcup_{i=1}^k V_i$.
    \end{itemize} 
\end{definition}

\begin{definition}
    The algebra $\Gamma_k$ consists of formal linear combinations of $k$-vertex-labelled graphs.
\end{definition}

Now, for the sake of clarity we let $X_k : \Gamma_k \rightarrow \Lm_k$ represent the map defined by letting $X_k(G)$ be the chromatic $k$-multisymmetric function of $G$ for each $k$-vertex-labelled graph, and extending linearly. 

In \cite{raul}, Penagui\~{a}o proved the aforementioned representation of $Ker(X)$ by showing that any graph may be written using these relations as a linear combination of \emph{complete multipartite graphs}, graphs which admit a partition of the vertex set into maximal stable sets $S_1, \dots, S_k$, meaning that each $S_i$ is complete to each $S_j$ with $i \neq j$ (equivalently, a complete multipartite graph is the complement of a disjoint union of cliques). We emulate this proof and show that an analogous result holds in the case of $k$-multisymmetric functions.

\begin{definition}
    For $\alpha \in \mathbb{Z}_{\geq 0}^k-\{0^k\}$, define $I^{\alpha}$ to be the graph with $||\alpha||$ vertices such that exactly $\alpha_i$ vertices have weight $\epsilon_i$ and no edges (thus $I^{\alpha}$ is an empty graph that is $k$-vertex-labelled $V_1 \sqcup \dots \sqcup V_k$ with $|V_i| = \alpha_i$). For an integer $k$-partition $\lm^k$, define $I^{\lm^k}$ to be the graph with vertex set $V(I^{\lm_1^k}) \sqcup \dots \sqcup V(I^{\lm_{l(\lm)}^k})$ and that contains an edge $v_kv_l$ for each pair of vertices $v_k, v_l$ that are not in of the same $I^{\lm_i^k}$, and no other edges (in graph theoretic terms, $I^{\lm^k}$ is the complete multipartite graph formed as the complete join of the $I^{\lm_i^k}$).
    We define $r_{\lm^k}$ to be the chromatic $k$-multisymmetric function of $I^{\lm^k}$.
\end{definition}

\begin{lemma}

The set $\{r_{\lm^k} : |\lm^k| = (i_1, \dots, i_k)\}$ is a basis for $\Lm_k^{(i_1, \dots, i_k)}$.

\end{lemma}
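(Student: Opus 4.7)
The plan is to expand each $r_{\lm^k}$ in the augmented monomial basis $\{\tm_{\mu^k}\}$ — which is already known to be a basis of $\Lm_k^{(i_1,\dots,i_k)}$ — and show the resulting change-of-basis matrix is triangular with unit diagonal. By Lemma \ref{lem:mbasis}, I would write $r_{\lm^k} = X_{I^{\lm^k}} = \sum_{\pi} \tm_{\lm^k(\pi)}$, where $\pi$ ranges over stable set partitions of $I^{\lm^k}$. Since $I^{\lm^k}$ is a complete multipartite graph whose maximal stable sets are precisely the vertex sets $V(I^{\lm_i^k})$, every stable set of $I^{\lm^k}$ lies inside exactly one of these parts. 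Consequently every stable partition $\pi$ refines the canonical partition $\{V(I^{\lm_1^k}), \dots, V(I^{\lm_{l(\lm^k)}^k})\}$.

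If $\pi$ splits each $V(I^{\lm_i^k})$ into $t_i \geq 1$ blocks, then $l(\lm^k(\pi)) = \sum_i t_i \geq l(\lm^k)$, with equality if and only if every $t_i = 1$, i.e., if $\pi$ is the canonical partition itself. The canonical partition plainly yields $\lm^k(\pi) = \lm^k$, so the multiset $\lm^k$ arises from a unique stable partition. This gives
$$
r_{\lm^k} = \tm_{\lm^k} + \sum_{\substack{\mu^k : |\mu^k| = |\lm^k| \\ l(\mu^k) > l(\lm^k)}} c_{\lm^k,\mu^k}\,\tm_{\mu^k}
$$
for some nonnegative integers $c_{\lm^k,\mu^k}$.

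To finish, I would order the $k$-tuple partitions of $(i_1,\dots,i_k)$ by length in increasing order, breaking ties arbitrarily. Under this ordering the transition matrix from $\{r_{\lm^k}\}$ to $\{\tm_{\mu^k}\}$ is upper triangular with ones on the diagonal, hence invertible. Since $\{\tm_{\mu^k} : |\mu^k| = (i_1,\dots,i_k)\}$ is already a basis of $\Lm_k^{(i_1,\dots,i_k)}$, so is $\{r_{\lm^k}\}$.

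The only possible subtlety is verifying that the canonical partition is the \emph{unique} stable partition producing the multiset $\lm^k$, rather than merely \emph{a} partition achieving it; but the strict length inequality for any proper refinement handles this immediately, so no substantive obstacle remains.
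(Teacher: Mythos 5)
Your proof is correct and takes essentially the same approach as the paper's: both expand $r_{\lm^k}$ in the augmented monomial basis via Lemma \ref{lem:mbasis}, use the fact that every stable partition of $I^{\lm^k}$ refines the canonical partition into the parts $V(I^{\lm_i^k})$, and conclude by triangularity of the transition matrix. The only cosmetic difference is that you order the index set by length where the paper uses reverse lexicographic order; your length argument actually makes the unit-diagonal claim more explicit than the paper's ``straightforward to see.''
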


\begin{proof}

Note that the stable sets of $I^{\lm^k}$ are subsets of the vertex sets $V(I^{\lm^k_j})$, $1\leq j \leq l(\lm^k)$. Let $\pi^* \vdash V(I^{\lm^k})$ be the stable partition of $I^{\lm^k}$ with the smallest number of blocks, so $\pi^* = \sqcup_j \, V(I^{\lm_j^k})$. Moreover, for set partitions $\pi_1, \pi_2 \vdash V(I^{\lm^k})$, we say $\pi_1 \leq \pi_2$ if $\pi_1$ is a refinement of $\pi_2$ as a set partition. 
 From Lemma \ref{lem:mbasis}, we see that
$$
r_{\lm^k} = \sum_{\substack{\pi \vdash V(I^{\lm^k}) \\ \pi \text{ stable}}} \tm_{\lm^k(\pi)} 
= \sum_{\pi \leq \pi^*} \tm_{\lm^k(\pi)}.
$$
So each $r_{\lm^k}$ can be expressed as a linear combination of the $\tm$ functions. Furthermore, consider the basis transition matrix from $r_{\lm^k}$ to $\tm_{\lm^k}$, with the rows and columns each indexed by all integer $k$-partitions listed in reverse lexicographic order. It is straightforward to see that this matrix is upper triangular and non-zero on the diagonal, so is invertible. 
Hence, $\{r_{\lm^k} : |\lm^k| = (i_1, \dots, i_k)\}$ is a basis for $\Lm_k^{(i_1, \dots, i_k)}$.
\end{proof}

\begin{cor}
The set $\{r_{\lm^k} : ||\lm^k|| = m\}$ is a basis for $\Lm_k^m$, and the set $\{r_{\lm^k}\}$ is a basis for $\Lm_k$.
\end{cor}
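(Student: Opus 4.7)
The plan is to derive both statements directly from the preceding lemma by exploiting the grading decompositions of $\Lm_k^m$ and $\Lm_k$ established in the discussion of multisymmetric functions. Recall that
$$
\Lm_k^m = \bigoplus_{(i_1, \dots, i_k)} \Lm_k^{(i_1, \dots, i_k)},
$$
where the sum ranges over all weak compositions $(i_1, \dots, i_k)$ of $m$ with $k$ parts, and $\Lm_k = \bigoplus_{m=0}^\infty \Lm_k^m$. A standard fact about direct sum decompositions of vector spaces is that the union of bases of the summands is a basis for the whole space.

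First I would observe that for any $k$-tuple partition $\lm^k$, the quantity $|\lm^k|$ (the componentwise sum of its tuples) is a $k$-tuple of nonnegative integers, and $||\lm^k||$ equals the sum of the entries of $|\lm^k|$. Hence the condition $||\lm^k|| = m$ is equivalent to requiring $|\lm^k|$ to be some weak composition of $m$ with $k$ parts, and each such $\lm^k$ belongs to exactly one of the summands $\Lm_k^{(i_1, \dots, i_k)}$ with $\sum_j i_j = m$. Accordingly the indexing set of $\{r_{\lm^k} : ||\lm^k|| = m\}$ partitions into the indexing sets of the bases of the $\Lm_k^{(i_1, \dots, i_k)}$ supplied by the previous lemma.

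Applying the previous lemma to each summand, I conclude that $\{r_{\lm^k} : ||\lm^k|| = m\}$ is a basis for $\Lm_k^m$. Taking the further union over all $m \geq 0$ and invoking the grading $\Lm_k = \bigoplus_m \Lm_k^m$ then gives that $\{r_{\lm^k}\}$ is a basis for $\Lm_k$, completing the proof. There is no real obstacle here; the argument is a bookkeeping exercise verifying that the $r_{\lm^k}$ are compatible with the bigrading in the expected way.
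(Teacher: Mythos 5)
Your argument is correct and is exactly the reasoning the paper intends: the corollary is stated without proof precisely because it follows immediately from the preceding lemma together with the direct-sum decompositions $\Lm_k^m = \bigoplus_{(i_1,\dots,i_k)} \Lm_k^{(i_1,\dots,i_k)}$ and $\Lm_k = \bigoplus_m \Lm_k^m$. Your write-up just makes this bookkeeping explicit.
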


We now will reiterate some definitions from \cite{modular} we will need:

\begin{definition}
Let $H_1, \dots, H_m$ be $k$-vertex-labelled graphs with the same vertex set. Given a linear combination $L = c_1H_1 + \dots + c_mH_m \in \Gamma_k$ and a $k$-vertex-labelled graph $G$ such that $V(G) \supseteq V(H_i)$, we define the \emph{extension of $L$ by $G$} as 
$$
Ext(L;G) = c_1(G \uplus E(H_1)) + \dots + c_m(G \uplus E(H_m)).
$$
where $\uplus$ represents that we add all edges of $E(H_i)$ to $G$, possibly resulting in multi-edges.
\end{definition}

We introduce extensions because it will be convenient to define certain key elements of $Ker(X_k)$ by finding a linear combination $L \in Ker(X_k)$ of small graphs such that $Ext(L;G) \in Ker(X_k)$ for all appropriate $G$; intuitively $L$ represents a local modification that always preserves the chromatic $k$-multisymmetric function inside of a larger graph.

\begin{definition}\label{def:rela}
\phantom{ }

\begin{itemize}
 \item For a $k$-vertex-labelled graph $G$ with vertex set $V = \sqcup_{i=1}^k V_i$, let $S_G = S_{V_1} \times \dots \times S_{V_k}$. Given $\sigma \in S_G$, let $G_{\sigma}$ denote the $k$-vertex-labelled graph arising from $G$ by applying $\sigma$ to $V$. Then for all $G_{\sigma}$,
$$
\ell_{iso}(G,\sigma) = G - G_{\sigma} \in \Gamma_k.
$$
\item For an ordered triple $t = (m,n,p)$ of positive integers, let $a_t = (m,i_m)$, $b_t = (n,i_n)$, $c_t=(p,i_p)$, where $i_m = 1$, $i_n = 1+\delta_{mn}$, and $i_p = 1 + \delta_{mp}+\delta_{np}$. Let $H_t$ be the graph with $V(H_t) = \{a_t,b_t,c_t\}$, and $E(H_t) = \{a_tb_t,a_tc_t,b_tc_t\}$. Define
$$\ell_{os(t)} = H_t - H_t \bk \{a_tb_t\} - H_t \bk \{a_tc_t\} + H_t \bk \{a_tb_t,a_tc_t\}.$$
\end{itemize}

\begin{figure}[hbt]
\begin{center}
  \begin{tikzpicture}[scale=1.5]
    \node[label=below:{$b_t$}, fill=black, circle] at (0, 0)(1){};
    \node[label=below:{$c_t$}, fill=black, circle] at (1, 0)(2){};
    \node[label=above:{$a_t$}, fill=black, circle] at (0.5, 0.866)(3){};

    \draw[black, thick] (3) -- (1);
    \draw[black, thick] (3) -- (2);
    \draw[black, thick] (2) -- (1);

    \draw (1.5, 0.5) coordinate (MI) node[right] { $\bf{-}$ };

    \node[label=below:{$b_t$}, fill=black, circle] at (2, 0)(4){};
    \node[label=below:{$c_t$}, fill=black, circle] at (3, 0)(5){};
    \node[label=above:{$a_t$}, fill=black, circle] at (2.5, 0.866)(6){};
    
    \draw[black, thick] (5) -- (4);
    \draw[black, thick] (6) -- (4);
    
    \draw (3.5, 0.5) coordinate (MI2) node[right] { $\bf{-}$ };

    \node[label=below:{$b_t$}, fill=black, circle] at (4, 0)(7){};
    \node[label=below:{$c_t$}, fill=black, circle] at (5, 0)(8){};
    \node[label=above:{$a_t$}, fill=black, circle] at (4.5, 0.866)(9){};

    \draw[black, thick] (7) -- (8);
    \draw[black, thick] (8) -- (9);
    
    \draw (5.5, 0.5) coordinate (PL) node[right] { $\bf{+}$ };

    \node[label=below:{$b_t$}, fill=black, circle] at (6, 0)(10){};
    \node[label=below:{$c_t$}, fill=black, circle] at (7, 0)(11){};
    \node[label=above:{$a_t$}, fill=black, circle] at (6.5, 0.866)(12){};

    \draw[black, thick] (10) -- (11);

  \end{tikzpicture}
\end{center}
\label{fig:rel2}
\caption{$\ell_{os(t)}$}
\end{figure}
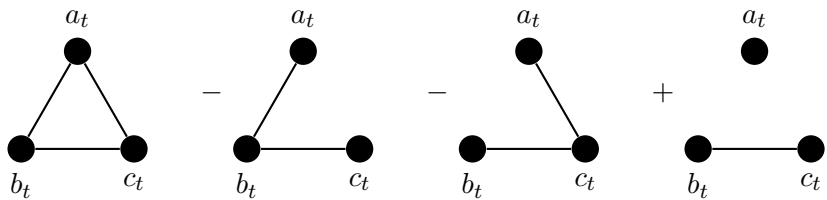

Furthermore, define $T_{iso}(k) = \{\ell_{iso}(G,\sigma): G \textnormal{ a } k \textnormal{-vertex-labelled graph}, \sigma \in S_G\}$, and $T_{os}(k) = \\ \{Ext(\ell_{os(t)};G) : G \textnormal{ a } k \textnormal{-vertex-labelled graph}, t \in \{1,2,\dots,k\}^3\}$.

\end{definition}

Intuitively, the first part of this definition is telling us that given a $k$-vertex-labelled graph, we can rearrange its vertices in any way that preserves the first coordinates of the labels to get the same chromatic $k$-multisymmetric function. The second part of the definition tells us that we may extend the Orellana-Scott modular relation given in \cite{ore} to $k$-multisymmetric functions (hence the notation $\ell_{os(t)}$).

We will now characterize $Ker(X_k)$. First, we prove a well-known auxiliary lemma. A $K_1 \sqcup K_2$ in a graph $G$ is a subgraph $G|_{\{v_1,v_2,v_3\}}$, where $v_1, v_2, v_3 \in V(G)$ are such that exactly one of $v_1v_2$, $v_1v_3$, $v_2v_3$ is an element of $E(G)$. If $G$ contains no such three vertices, it is said to be \emph{$K_1 \sqcup K_2$-free}.

\begin{lemma}[Folklore]\label{lem:aux}

A graph is $K_1 \sqcup K_2$-free if and only if it is a complete multipartite graph.

\end{lemma}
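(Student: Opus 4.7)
The plan is to prove both directions separately, with the reverse direction being the substantive one. The strategy for the reverse direction is to show that ``non-adjacency'' is an equivalence relation on $V(G)$, and the equivalence classes will be exactly the color classes of the complete multipartite structure.

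For the forward direction, assume $G$ is complete multipartite with vertex partition $V(G) = S_1 \sqcup \cdots \sqcup S_k$ into independent sets, any two of which are completely joined. Given any three distinct vertices $v_1,v_2,v_3$, I would consider how they distribute among the parts $S_i$: if all three lie in one part there are $0$ edges; if two lie in one part and one in another there are exactly $2$ edges; if all three lie in distinct parts there are exactly $3$ edges. In no case is there exactly one edge, so $G$ is $K_1 \sqcup K_2$-free.

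For the reverse direction, suppose $G$ is $K_1 \sqcup K_2$-free. Define a relation $\sim$ on $V(G)$ by $u \sim v$ iff $u = v$ or $uv \notin E(G)$. Reflexivity and symmetry are immediate. The key step, which is where the hypothesis is used, is transitivity: if $u,v,w$ are distinct with $u \sim v$ and $v \sim w$, then $uv,vw \notin E(G)$; if we had $uw \in E(G)$ then the induced subgraph $G|_{\{u,v,w\}}$ would contain exactly one edge, contradicting $K_1 \sqcup K_2$-freeness. Hence $\sim$ is an equivalence relation. Let $S_1, \dots, S_k$ be its equivalence classes. By construction each $S_i$ is an independent set, and any two vertices in distinct classes are adjacent (otherwise they would be related by $\sim$). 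Therefore $G$ is complete multipartite with classes $S_1, \dots, S_k$.

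The main obstacle, such as it is, lies in recognizing that the $K_1 \sqcup K_2$-free hypothesis is equivalent exactly to the transitivity of non-adjacency; once this reformulation is made the rest is essentially bookkeeping. The rest of the argument is just checking the two trivial properties of the equivalence relation and interpreting the resulting partition graph-theoretically.
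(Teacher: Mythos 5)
Your proof is correct. For the forward direction you do a direct case analysis on how three vertices distribute among the parts (obtaining $0$, $2$, or $3$ edges but never exactly $1$), whereas the paper argues by contradiction that the isolated vertex $v_3$ would have to lie in the same maximal stable set as two adjacent vertices; these are essentially the same observation. For the substantive reverse direction your route is genuinely different: the paper takes a minimal counterexample, peels off a maximal stable set $U$, shows every outside vertex is complete to $U$, and invokes minimality on $G|_{V(G)\setminus U}$, while you observe that $K_1 \sqcup K_2$-freeness is precisely the transitivity of the relation ``equal or non-adjacent,'' so that relation is an equivalence relation whose classes form the multipartition. Your argument avoids induction entirely and isolates the conceptual content of the hypothesis in a single line, which is arguably the cleaner of the two; the paper's peeling argument is the kind that generalizes more readily to iterative structural decompositions. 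One minor point worth stating explicitly in your write-up: the equivalence classes you produce are not just stable sets but \emph{maximal} stable sets (any vertex outside a class is adjacent to every vertex in it), which is how the paper phrases the definition of complete multipartite; this is immediate from your construction but deserves a sentence.
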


\begin{proof}
We first prove that all complete multipartite graphs are $K_1 \sqcup K_2$-free. Suppose otherwise for a contradiction; let $G$ be a complete multipartite graph containing $K_1 \sqcup K_2$ as an induced subgraph. Say $v_1,v_2,v_3 \in V(G)$ make up this subgraph with the edge $v_1v_2 \in E(G)$. Since $v_1, v_2$ are adjacent, they lie in different maximal stable sets. But since $v_3$ is non-adjacent to both $v_1$ and $v_2$, it lies in the same maximal stable set as both $v_1$ and $v_2$, a contradiction.

For the other direction, let $G$ be a minimal counterexample. Note that if $G$ contains no edges, then it is complete multipartite with a single stable set. Otherwise, let $U \subset V(G)$ be a maximal stable set and consider a vertex $v$ outside of $U$. $v$ must be adjacent to a vertex in $U$, otherwise $U$ is not maximal. But then $v$ must be adjacent to every vertex in $U$, since otherwise there is an induced $K_1 \sqcup K_2$. So $V(G) \setminus U$ is complete to $U$. Lastly, note that the graph induced by $V(G) \setminus U$ is complete multipartite by the minimality of $G$. Hence, $G$ is complete multipartite.
\end{proof}

\begin{theorem}\label{thm:kerk}
$$
span(T_{iso}(k),T_{os}(k)) = Ker(X_k).
$$
\end{theorem}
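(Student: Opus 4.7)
The plan is to mirror Penagui\~{a}o's proof \cite{raul} for $Ker(X)$, with the new ingredient being careful bookkeeping of the $k$ vertex labels.

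For the easy inclusion $span(T_{iso}(k), T_{os}(k)) \subseteq Ker(X_k)$: elements of $T_{iso}(k)$ vanish under $X_k$ because a permutation $\sigma \in S_G$ merely reindexes the sum in Definition \ref{def:gwk} while preserving every weight (all vertices in $V_i$ share weight $\epsilon_i$). For $T_{os}(k)$, one verifies $X_k(\ell_{os(t)}) = 0$ by the same combinatorial argument as the original Orellana--Scott triangular modular relation \cite{ore}, tracking the weights $\epsilon_m, \epsilon_n, \epsilon_p$ on $a_t, b_t, c_t$; the extension by an arbitrary ambient graph preserves kernel membership because any proper coloring of the remaining vertices extends uniformly to each of the four triangle pieces.

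The content is in the reverse inclusion. The key reduction lemma to prove is that every $k$-vertex-labelled graph $G$ is congruent modulo $span(T_{iso}(k), T_{os}(k))$ to a linear combination of the canonical complete multipartite graphs $I^{\lambda^k}$. I would prove this by induction on the number of non-edges $\binom{|V(G)|}{2} - |E(G)|$. In the base case of zero non-edges, $G$ is a clique, which is $T_{iso}(k)$-equivalent to the $I^{\lambda^k}$ whose parts are the singleton tuples matching the label distribution of $G$. For the inductive step, if $G$ is not complete multipartite, then Lemma \ref{lem:aux} produces $a, b, c \in V(G)$ with $bc \in E(G)$ and $ab, ac \notin E(G)$. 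Let $m, n, p$ be their first-coordinate labels, set $t = (m, n, p)$, and apply a preliminary $T_{iso}(k)$ permutation to assume $a = a_t$, $b = b_t$, $c = c_t$. Then $Ext(\ell_{os(t)}; G \setminus \{bc\}) \in T_{os}(k)$, and unpacking it yields
\[
G \;\equiv\; (G \cup \{ab\}) + (G \cup \{ac\}) - (G \cup \{ab, ac\}) \pmod{span(T_{iso}(k), T_{os}(k))},
\]
where each graph on the right has strictly fewer non-edges than $G$, so the inductive hypothesis applies.

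Given any $L \in Ker(X_k)$, applying the key lemma rewrites $L$ modulo the relations as $\sum c_{\lambda^k} I^{\lambda^k}$. Applying $X_k$ then yields $\sum c_{\lambda^k} r_{\lambda^k} = 0$, so by the preceding corollary all $c_{\lambda^k} = 0$, and hence $L \in span(T_{iso}(k), T_{os}(k))$. The main obstacle, and the only genuinely new feature beyond the proof in \cite{raul}, is the label bookkeeping: one must align $a,b,c$ with $a_t, b_t, c_t$ via a $T_{iso}(k)$ move before invoking the extension in the inductive step, and in the base case one must choose bijections within each $V_i$ that simultaneously match the stable-set structure of $G$ to the canonical stable-set partition of the target $I^{\lambda^k}$. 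Both are routine but require verifying that the second-coordinate relabelings compose to an element of $S_G$, so that the identifications really are justified by $T_{iso}(k)$.
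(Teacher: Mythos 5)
Your proposal is correct and follows essentially the same route as the paper: the easy inclusion is handled identically, and your reduction of an arbitrary $k$-vertex-labelled graph to the complete multipartite graphs $I^{\lambda^k}$ via Lemma \ref{lem:aux} and the extended triangular modular relation, followed by the linear independence of the $r_{\lambda^k}$, is exactly the paper's argument. The only difference is organizational: you package the reduction as an induction on the number of non-edges of a single graph, whereas the paper runs the same elimination process on the whole linear combination and argues termination by repeatedly selecting a graph with a $K_1 \sqcup K_2$ and as many non-edges as possible.
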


\begin{proof}
We first show $span(T_{iso}(k),T_{os}(k)) \subset Ker(X_k)$. Note that $T_{iso}(k) \subset Ker(X_k)$ as a relabelling of the graph maintaining vertex weights does not alter the chromatic multisymmetric function. Furthermore, using the same arguments as in \cite{raul}, it is easy to see that $T_{os}(k)$ is a modular relation and thus an element of the kernel.

It remains to show that $Ker(X_k) \subset span(T_{iso}(k),T_{os}(k))$. Let $a \in Ker(X_k)$ be an arbitrary element. Define $S_o$ to be the set of all linear combinations of elements of $T_{os}(k)$ and $S_i$ to be the set of all linear combinations of elements of $T_{iso}(k)$. Then we endeavor to show that there exists $o \in S_o$, $i \in S_i$ such that $a - o - i= 0$, from which it would follow that $a = o+i \in span(T_{iso}(k),T_{os}(k))$.

Recall from Lemma \ref{lem:aux} that a graph is complete multipartite if and only if it is $K_1 \sqcup K_2$-free. The key idea is to apply the modular relation to a kernel element until it contains no $K_1 \sqcup K_2$ as an induced subgraph. Then we get a linear combination of the $r$-basis elements, which we show evaluates to 0. 

Consider a linear combination of graphs equal to $a$. Let $G$ be a graph in this linear combination with a non-zero coefficient and a $K_1 \sqcup K_2$ induced subgraph $H$, and subject to these conditions, with as many non-edges as possible. Then, using the notation outlined in Definition \ref{def:rela}, there exists some $t$ such that for an element $i$ of $T_{iso}(k)$, $G-i = Ext(H_t \bk \{a_tb_t,a_tc_t\}; G')$, where $G'$ is a graph that isomorphic to $G$ but with the labels permuted. Then, $G-i-Ext(\ell_{os(t)}; G') = Ext(H_t \bk \{ a_tb_t\}; G') + Ext(H_t \bk \{ a_tc_t\}; G') - Ext(H_t; G')$, which is a linear combination of graphs with fewer non-edges. We repeat this process for each instance of an induced $K_1 \sqcup K_2$ subgraph in the linear combination. Note that the process terminates as the number of non-edges in each step is strictly decreasing, so once a graph no longer occurs with non-zero coefficient, no future step will change that.

Once this process has terminated, we apply the elements of $T_{iso}(k)$ such that each graph in the resulting linear combination is one of the $I^{\lm^k}$. Then in total, we have shown that for each $a$, there exists $o \in S_o$, $i \in S_i$ such that $a - o -i = \sum c_{\lm^k}I^{\lm^k}$, and $X_k(a - o -i) = \sum c_{\lm^k} r_{\lm^k}$. But then, since $a - o -i \in Ker(X_k)$,
$$
0 = X_k(a-o-i) = \sum c_{\lm^k} r_{\lm^k}.
$$
But the $\{ r_{\lm^k}\}$ form a basis of $\Lambda_k$, so $c_{\lm^k} = 0$ for all $k$-tuple partitions. Hence, $a - o -i = 0$ and $a \in span(T_{iso}(k),T_{os}(k))$.
\end{proof}

\section{Applications to Chromatic Symmetric Function Relations}

In this section, we apply the theory built so far to collect some previously known relations for $X_G$ under the same umbrella, and provide some extensions. To do so, we need to formally relate the $Ker(X_k)$ to $Ker(X)$ via projection.

\subsection{The Algebra in the Background}

\begin{definition}

Let $\pi_{\Gamma_k}: \Gamma_k \rightarrow \Gamma_{k-1}$ be given by extending linearly the operation that relabels a \\ $k$-vertex-labelled graph by relabelling its $\alpha_k$ vertices of weight $\epsilon_k$ as \\ $(k-1,\alpha_{k-1}+1), \dots, (k-1, \alpha_{k-1}+\alpha_k)$ (thus folding the vertex set $V_k$ into $V_{k-1}$).

Let $\pi_{\Lambda_k}: \Lambda_k \rightarrow \Lm_{k-1}$ be given by extending linearly the mapping taking $(x_i)_k$ to $(x_i)_{k-1}$.

\end{definition}

\begin{lemma}\label{lem:commute}

The following diagram commutes:
\begin{center}
\begin{tikzcd}
\Gamma_k \arrow{r}{X_k} \arrow[dr, phantom, "\circlearrowleft"] \arrow[swap]{d}{\pi_{\Gamma_{k}}} & \Lambda_k \arrow{d}{\pi_{\Lambda_k}} \\%
\Gamma_{k-1} \arrow{r}{X_{k-1}}& \Lambda_{k-1}
\end{tikzcd}
\end{center}
\end{lemma}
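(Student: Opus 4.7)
The plan is to verify the diagram on a single $k$-vertex-labelled graph $G$ and then invoke linearity. Since all four maps involved are $\mathbb{C}$-linear by definition, it suffices to check the identity $\pi_{\Lambda_k}(X_k(G)) = X_{k-1}(\pi_{\Gamma_k}(G))$ for an arbitrary $k$-vertex-labelled graph $G$ with vertex partition $V(G) = V_1 \sqcup \dots \sqcup V_k$.

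The central observation is that neither $\pi_{\Gamma_k}$ nor $\pi_{\Lambda_k}$ alters any combinatorial object that governs the sum defining $X_k$; they only modify how vertices in the last block are labelled and how variables in the last set are named. Specifically, $\pi_{\Gamma_k}(G)$ has the same underlying (unweighted) graph as $G$, so the set of proper colorings $\kappa:V(G) \to \mathbb{N}_{>0}$ is literally the same for $G$ (viewed as a $k$-vertex-labelled graph) and $\pi_{\Gamma_k}(G)$ (viewed as a $(k-1)$-vertex-labelled graph). The only change is that for $v \in V_k$, the weight $w(v) = \epsilon_k^k$ in $G$ becomes $\epsilon_{k-1}^{k-1}$ in $\pi_{\Gamma_k}(G)$, while for $v \in V_i$ with $i < k-1$ the weight $\epsilon_i^k$ becomes $\epsilon_i^{k-1}$, and for $v \in V_{k-1}$ the vertex keeps its weight $\epsilon_{k-1}^{k-1}$.

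Now I would carry out the matching term-by-term. Fix a proper coloring $\kappa$ and compare its contributions on both sides. On the top route, its contribution to $X_k(G)$ is
$$\prod_{v \in V(G)} x_{\kappa(v)}^{w(v)} = \prod_{i=1}^{k} \prod_{v \in V_i} (x_{\kappa(v)})_i,$$
and applying $\pi_{\Lambda_k}$ replaces each factor $(x_{\kappa(v)})_k$ with $v \in V_k$ by $(x_{\kappa(v)})_{k-1}$, giving
$$\left(\prod_{i=1}^{k-2} \prod_{v \in V_i} (x_{\kappa(v)})_i\right) \prod_{v \in V_{k-1} \cup V_k} (x_{\kappa(v)})_{k-1}.$$
On the bottom route, the same $\kappa$ is a proper coloring of $\pi_{\Gamma_k}(G)$, and its contribution to $X_{k-1}(\pi_{\Gamma_k}(G))$ is exactly the right-hand side displayed above, because the new partition has $V_{k-1}$ replaced by $V_{k-1} \cup V_k$. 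Thus the contributions agree coloring-by-coloring, and summing over all proper $\kappa$ gives the desired equality.

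There is no real obstacle here; the only thing to watch is that $\pi_{\Lambda_k}$ must actually land in $\Lambda_{k-1}$, i.e.\ that applying the substitution $(x_i)_k \mapsto (x_i)_{k-1}$ to a $k$-multisymmetric function produces a $(k-1)$-multisymmetric function. This is immediate from the definition: any permutation $\sigma \in S_{\mathbb{Z}^+}$ acting diagonally on the remaining $k-1$ variable sets commutes with the substitution, and the original diagonal action on all $k$ sets already fixed the input. With that check in place, the diagram commutes.
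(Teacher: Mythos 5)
Your proof is correct and is exactly the ``direct computation'' that the paper's one-line proof alludes to: reduce to a single $k$-vertex-labelled graph by linearity, observe that $\pi_{\Gamma_k}$ leaves the underlying graph (hence the set of proper colorings) unchanged, and match monomials coloring-by-coloring under the variable renaming $(x_i)_k \mapsto (x_i)_{k-1}$. The closing check that $\pi_{\Lambda_k}$ lands in $\Lambda_{k-1}$ is a sensible addition, so there is nothing to correct.
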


\begin{proof}

This follows by direct computation.

\end{proof}

\begin{cor}\label{cor:proj}

Let $L = c_1H_1 + \dots + c_mH_m \in \Gamma_k$ be a linear combination of $k$-vertex-labelled graphs such that $L \in Ker(X_k)$, and let $\pi = \pi_{\Gamma_2} \circ \dots \circ \pi_{\Gamma_k}$. Then $\pi(L) \in Ker(X)$.

\end{cor}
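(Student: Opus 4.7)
The plan is to iterate Lemma \ref{lem:commute} a total of $k-1$ times, reducing the number of variable sets by one at each step. The composition $\pi = \pi_{\Gamma_2} \circ \dots \circ \pi_{\Gamma_k}$ is a telescoping chain of the projection maps appearing on the left side of the commutative square, so each application strips off one coordinate from both $\Gamma_\bullet$ and $\Lambda_\bullet$ simultaneously, with $\Gamma_1 = \Gamma$ and $\Lambda_1 = \Lambda$ (so that $X_1 = X$).

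Concretely, I would argue as follows. Since $L \in Ker(X_k)$, we have $X_k(L) = 0$. Applying Lemma \ref{lem:commute} directly gives
\[
X_{k-1}\bigl(\pi_{\Gamma_k}(L)\bigr) \;=\; \pi_{\Lambda_k}\bigl(X_k(L)\bigr) \;=\; \pi_{\Lambda_k}(0) \;=\; 0,
\]
so $\pi_{\Gamma_k}(L) \in Ker(X_{k-1})$. Now $\pi_{\Gamma_k}(L)$ is a linear combination of $(k-1)$-vertex-labelled graphs lying in the kernel of $X_{k-1}$, so we are in the same setting one level down. Applying Lemma \ref{lem:commute} again (with $k$ replaced by $k-1$) yields $\pi_{\Gamma_{k-1}}\bigl(\pi_{\Gamma_k}(L)\bigr) \in Ker(X_{k-2})$. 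Iterating, at step $j$ we obtain
\[
\bigl(\pi_{\Gamma_{k-j+1}} \circ \dots \circ \pi_{\Gamma_k}\bigr)(L) \;\in\; Ker(X_{k-j}).
\]
Taking $j = k-1$ gives $\pi(L) \in Ker(X_1) = Ker(X)$, which is the desired conclusion.

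There is no substantive obstacle here: the corollary is a formal consequence of Lemma \ref{lem:commute} together with the observation that $\Gamma_1 = \Gamma$ and $X_1 = X$. The only thing to be slightly careful about is that the commuting square in Lemma \ref{lem:commute} is stated for the single step from level $k$ to level $k-1$, so the proof should explicitly note that the same lemma applies verbatim at each intermediate level (i.e.\ the statement holds for every $k \geq 2$), justifying the induction on the number of projections composed.
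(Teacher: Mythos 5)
Your proof is correct and matches the paper's intended argument: the corollary is stated without explicit proof as an immediate consequence of Lemma \ref{lem:commute}, and your iteration of the commuting square down to $\Gamma_1 = \Gamma$, $X_1 = X$ is exactly that consequence spelled out.
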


Thus, describing the $Ker(X_k)$ is one approach to finding more insight for $Ker(X)$. To make full use of this, we want to take relations on small graphs and use them to understand patterns on larger graphs, which we will do by adding structure around a labelled graph:

\begin{definition}

Let $H \in \Gamma_k$ be a $k$-vertex-labelled graph with vertex set $V = \bigsqcup_{i=1}^k V_i$. If $H^*$ is a $(k+1)$-vertex-labelled graph such that $V(H^*) = V \sqcup V_{k+1}$, where all vertices of $V_{k+1}$ have weight $\epsilon_{k+1}$ and all vertices of $V$ have the same weights as in $H$, we call $H^*$ an \emph{augmentation of} $H$. In this case, we define the \emph{lift} of $H$ to $H^*$ as 
$$
\Lift(H;H^*) = H^* \uplus E(H)
$$

Furthermore, we may view $\Lift$ as a map from $\Gamma_k$ to $\Gamma_{k+1}$ by extending linearly, so if $L = c_1H_1 + \dots + c_mH_m \in \Gamma_k$ is a linear combination of $k$-vertex-labelled graphs with the same vertex set with shared augmentation $H^*$, then
$$
\Lift(L;H^*) = c_1(H^* \uplus E(H_1)) + \dots + c_m(H^* \uplus E(H_m)).
$$

\end{definition}






Note that $\Lift(L;H^*)$ is defined essentially identically as $Ext(L;G)$, except that $Ext$ is an operation from $\Gamma_k$ to itself, whereas $\Lift$ is an operation from $\Gamma_k$ to $\Gamma_{k+1}$.

Corollary \ref{cor:proj} shows that $Ker(X_k)$ gets ``strictly smaller" as $k$ increases; but we can use the $\Lift$ operation to nonetheless describe certain elements of $Ker(X_k)$ for higher $k$ relative to elements of lower ones.

\begin{obs}\label{obs:oslift}

Let $o \in T_{os}(k)$ (so $o$ is a four-term linear combination of $k$-vertex-labelled graphs satisfying the triangular modular relation). Then for every augmentation $H^*$ of the shared vertex set of the graphs in $o$, we have $\Lift(o;H^*) \in T_{os}(k+1)$.

\end{obs}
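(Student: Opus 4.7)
The plan is to show that the operations $\Lift$ and $Ext$ essentially commute when the small relation $\ell_{os(t)}$ has labels in $\{1,\ldots,k\}$, so lifting simply produces the \emph{same} local modular relation applied inside a larger ambient graph.

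First I would unpack the hypothesis: an arbitrary element $o \in T_{os}(k)$ has the form $Ext(\ell_{os(t)};G)$ for some triple $t \in \{1,\ldots,k\}^3$ and some $k$-vertex-labelled graph $G$ with $V(G) \supseteq \{a_t,b_t,c_t\}$. Writing $o$ out, each of its four summands is of the form $G \uplus E(H_i)$, where $H_i$ ranges over $H_t$, $H_t \bk \{a_tb_t\}$, $H_t \bk \{a_tc_t\}$, and $H_t \bk \{a_tb_t,a_tc_t\}$. In particular every summand of $o$ has the same vertex set $V(G)$, so an augmentation $H^*$ of this shared vertex set attaches a single new block $V_{k+1}$ uniformly to all four terms.

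The key step is then a routine manipulation: since the edge-adjoining operation $\uplus$ is associative on edge multisets, for each summand I have
$$
H^* \uplus E\bigl(G \uplus E(H_i)\bigr) \;=\; H^* \uplus E(G) \uplus E(H_i) \;=\; G^* \uplus E(H_i),
$$
where $G^* := H^* \uplus E(G)$ is a well-defined $(k+1)$-vertex-labelled graph still containing the distinguished vertices $\{a_t,b_t,c_t\}$ with their original labels (they live in $V(G) \subseteq V(G^*)$). Combining the four identities with their signs gives
$$
\Lift(o;H^*) \;=\; Ext(\ell_{os(t)};G^*)
$$
inside $\Gamma_{k+1}$. Because $t \in \{1,\ldots,k\}^3 \subseteq \{1,\ldots,k+1\}^3$ and the construction of $\ell_{os(t)}$ never references the index $k+1$, the definition of $\ell_{os(t)}$ is identical whether read in $\Gamma_k$ or $\Gamma_{k+1}$, so the right-hand side is by definition an element of $T_{os}(k+1)$.

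The main (and only) obstacle is purely notational: one must carefully track which vertex sets, edge multisets, and label conventions persist when $\Lift$, $Ext$, and $\uplus$ are composed, and confirm that multi-edges created by $\uplus$ do not cause a mismatch between the two groupings. No substantive combinatorial or algebraic content is needed beyond unwinding definitions, so the result should follow cleanly in a few lines.
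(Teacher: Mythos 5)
Your proof is correct, and it matches the paper's (implicit) reasoning: the paper states this as an Observation with no written proof, treating it as immediate from the definitions, and your argument is precisely the definitional unwinding --- writing $o = Ext(\ell_{os(t)};G)$, using associativity of multiset union of edges to get $\Lift(o;H^*) = Ext(\ell_{os(t)};\Lift(G;H^*))$, and noting $t \in \{1,\dots,k\}^3 \subseteq \{1,\dots,k+1\}^3$ --- that the authors clearly intend. No gaps.
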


\begin{obs}\label{obs:isolift}

Let $G$ be a $k$-vertex-labelled graph, and let $G-G_{\sigma} \in T_{iso}(k)$. Let $H^*$ be an augmentation of $G$, and let $\sigma'$ be the permutation of $V(H^*)$ that is the identity on elements of $V_{k+1}(H^*)$, and restricts to $\sigma$ otherwise.

Then $\Lift(G;H^*)-\Lift(G_{\sigma};H_{\sigma'}^{*}) \in T_{iso}(k+1)$.

\end{obs}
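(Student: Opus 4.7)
The plan is to show directly that $\Lift(G_\sigma;H^*_{\sigma'})$ equals the graph obtained from $\Lift(G;H^*)$ by applying the permutation $\sigma'$, so that their difference is exactly $\ell_{iso}(\Lift(G;H^*),\sigma')\in T_{iso}(k+1)$.

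First I would verify that $\sigma'\in S_{H^*}$ in the sense of Definition \ref{def:rela}: because $\sigma\in S_G=S_{V_1}\times\cdots\times S_{V_k}$ preserves each block $V_i$, and $\sigma'$ is defined to be the identity on $V_{k+1}(H^*)$, the permutation $\sigma'$ lies in $S_{V_1}\times\cdots\times S_{V_k}\times S_{V_{k+1}(H^*)}=S_{H^*}$. In particular, $\ell_{iso}(\Lift(G;H^*),\sigma')=\Lift(G;H^*)-(\Lift(G;H^*))_{\sigma'}$ is a bona fide element of $T_{iso}(k+1)$.

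Next, I would compare $(\Lift(G;H^*))_{\sigma'}$ with $\Lift(G_\sigma;H^*_{\sigma'})$. By the definition of $\Lift$,
\[
\Lift(G;H^*)=H^*\uplus E(G),\qquad \Lift(G_\sigma;H^*_{\sigma'})=H^*_{\sigma'}\uplus E(G_\sigma).
\]
Applying the vertex permutation $\sigma'$ to $H^*\uplus E(G)$ commutes with the $\uplus$ operation in the sense that the edge multiset of the result is the union of the images of the two edge multisets under $\sigma'$. The image of $E(H^*)$ under $\sigma'$ is by definition $E(H^*_{\sigma'})$. For $E(G)$, since $\sigma'$ restricted to $V(G)=V_1\sqcup\cdots\sqcup V_k$ is exactly $\sigma$, the image of $E(G)$ under $\sigma'$ is $E(G_\sigma)$. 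Hence $(H^*\uplus E(G))_{\sigma'}=H^*_{\sigma'}\uplus E(G_\sigma)=\Lift(G_\sigma;H^*_{\sigma'})$.

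Combining these two observations gives
\[
\Lift(G;H^*)-\Lift(G_\sigma;H^*_{\sigma'})=\Lift(G;H^*)-(\Lift(G;H^*))_{\sigma'}=\ell_{iso}(\Lift(G;H^*),\sigma')\in T_{iso}(k+1),
\]
which is the desired conclusion. The argument is essentially bookkeeping rather than containing any deep obstacle; the one point that requires care is confirming that $\sigma'$ truly lies in $S_{H^*}$ and that the edge-multiset action of $\sigma'$ respects $\uplus$, because if edges of $H^*$ and edges of $G$ happen to coincide, they contribute separate copies to the multi-edge union and both get permuted consistently by $\sigma'$.
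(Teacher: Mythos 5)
Your proof is correct and is exactly the argument the paper intends: the paper states this as an Observation without proof, but its proof of Theorem \ref{thm:pass} justifies the same step by noting that $\Lift(G;H^*)$ and $\Lift(G_{\sigma};H^*_{\sigma'})$ are related by the relabelling $\sigma'$, which is precisely your identity $(\Lift(G;H^*))_{\sigma'}=\Lift(G_{\sigma};H^*_{\sigma'})$. Your write-up simply makes explicit the bookkeeping (that $\sigma'\in S_{H^*}$ and that applying $\sigma'$ commutes with $\uplus$) that the paper leaves implicit.
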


Note that although Observation \ref{obs:oslift} essentially characterizes all elements of any $T_{os}(k)$ (as would be expected since this is a modular relation for all graphs), the elements formed by Observation \ref{obs:isolift} do not capture all possibilities for $T_{iso}(k+1)$ since we have not included isomorphisms induced by permutations that act nontrivially on $V_{k+1}$.

\subsection{Chromatic Symmetric Function Relations}

We now show how to use the above observations in conjunction with Corollary \ref{cor:proj} to give a systematic way to construct more complex elements of $Ker(X)$ from simpler ones.

\begin{definition}
If $L = c_1H_1 + \dots + c_mH_m \in \Gamma_k$ is a linear combination of $k$-vertex-labelled graphs with the same vertex set $V = \bigsqcup_{i=1}^k V_i$ such that $L \in Ker(X_k)$, we say that a \emph{kernel-form presentation} of $L$ is an expression $L = I+O$, where $I \in span(T_{iso}(k))$ and $O \in span(T_{os}(k))$. We say that $S \subseteq S_{V_1} \times \dots \times S_{V_k}$ is a \emph{sufficient (permutation) set} for $L$ if there exists a kernel form presentation $L = I+O$ such that $I \in span(\{l_{iso}(G,\sigma) : G \textnormal{ a } k \textnormal{-vertex-labelled graph with }\\ V(G) = \bigsqcup_{i=1}^k V_i, \sigma \in S\})$.
\end{definition}

Note that there may be multiple distinct choices of $S$ that form a sufficient permutation set for $L$.

\begin{theorem}\label{thm:pass}

Let $L = c_1H_1 + \dots + c_mH_m \in \Gamma_k$ be a linear combination of $k$-vertex-labelled graphs with the same vertex set $V = \bigsqcup_{i=1}^k V_i$. Suppose that $L \in Ker(X_k)$, and let $S \subseteq S_{V_1} \times \dots \times S_{V_k}$ be a sufficient permutation set for $L$. 

Suppose that $H^*$ is a $(k+1)$-vertex-labelled graph that is an augmentation of the $H_i$, and suppose furthermore that for every $\sigma \in S$ we have $H^* = H_{\sigma'}^*$ as labelled graphs, where $\sigma'$ is as in Observation \ref{obs:isolift}. Then
$$
\Lift(L;H^*) \in Ker(X_{k+1}).
$$

\end{theorem}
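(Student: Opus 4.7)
\medskip

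\noindent\textbf{Proof proposal.} The plan is to exploit linearity of $\Lift(-;H^{*})$ together with the two preservation Observations \ref{obs:oslift} and \ref{obs:isolift} to push a kernel-form presentation of $L$ through the lift map, and then invoke Theorem \ref{thm:kerk} to conclude that the lifted combination lies in $Ker(X_{k+1})$.

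First, I would unpack the hypothesis that $S$ is a sufficient permutation set. By definition this gives a decomposition $L = I + O$ with $O \in \text{span}(T_{os}(k))$ and $I$ expressible as a linear combination of terms of the form $\ell_{iso}(G,\sigma) = G - G_{\sigma}$ where $G$ ranges over $k$-vertex-labelled graphs on $V = \bigsqcup_{i=1}^{k} V_i$ and the permutations $\sigma$ come only from $S$. The key point of restricting to $S$ is that exactly the permutations in $S$ are those for which the compatibility condition $H^{*} = H^{*}_{\sigma'}$ is guaranteed.

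Next I would apply $\Lift(-;H^{*})$, which is $\mathbb{C}$-linear by its defining formula, to obtain
$$
\Lift(L;H^{*}) \;=\; \Lift(I;H^{*}) \;+\; \Lift(O;H^{*}).
$$
For the second summand, Observation \ref{obs:oslift} tells us that each term of $O$ (a modular relation on some triple $(a_t,b_t,c_t)$ extended by a shared graph $G'$ with $V(G') = V$) lifts term-by-term to an element of $T_{os}(k+1)$, since attaching the extra vertex class $V_{k+1}$ through $H^{*}$ preserves the four-term modular structure on the triangle. Hence $\Lift(O;H^{*}) \in \text{span}(T_{os}(k+1))$.

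For the first summand, each generator $G - G_{\sigma}$ of $I$ (with $\sigma \in S$) lifts to $\Lift(G;H^{*}) - \Lift(G_{\sigma};H^{*})$. By Observation \ref{obs:isolift} applied to the augmentation $H^{*}$ and the extension $\sigma'$ of $\sigma$ by the identity on $V_{k+1}$, the difference $\Lift(G;H^{*}) - \Lift(G_{\sigma};H^{*}_{\sigma'})$ is an element of $T_{iso}(k+1)$. Here the hypothesis $H^{*} = H^{*}_{\sigma'}$ is used in an essential way: it lets me replace $H^{*}_{\sigma'}$ by $H^{*}$ in the second term, so that the lifted generator actually coincides with an element of $T_{iso}(k+1)$ rather than merely differing from one. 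Consequently $\Lift(I;H^{*}) \in \text{span}(T_{iso}(k+1))$. Combining the two parts yields $\Lift(L;H^{*}) \in \text{span}(T_{iso}(k+1),T_{os}(k+1))$, which by Theorem \ref{thm:kerk} is exactly $Ker(X_{k+1})$.

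The main obstacle, and the only delicate point, is the handling of $\Lift(I;H^{*})$: one must verify that the identity $H^{*} = H^{*}_{\sigma'}$ really does make the lifted generator fit the template of $T_{iso}(k+1)$ on the nose, not just up to further relabellings. This is precisely why sufficient permutation sets were defined, and why the statement restricts to $\sigma \in S$ rather than to arbitrary $\sigma \in S_{V_1}\times\dots\times S_{V_k}$; for permutations outside of $S$ the augmentation $H^{*}$ need not be fixed by $\sigma'$, in which case extra correction terms would appear and the lift would generally leave the kernel.
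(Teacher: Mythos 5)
Your proposal is correct and follows essentially the same route as the paper's proof: decompose $L = I + O$ via the kernel-form presentation guaranteed by the sufficient permutation set, apply $\Lift(-;H^*)$ linearly, use the hypothesis $H^* = H^*_{\sigma'}$ to rewrite $\Lift(G_\sigma;H^*)$ as $\Lift(G_\sigma;H^*_{\sigma'})$ so that Observation \ref{obs:isolift} applies, handle the $T_{os}$ part via Observation \ref{obs:oslift}, and conclude with Theorem \ref{thm:kerk}. Your closing remark correctly identifies the one delicate point (that the identity $H^*=H^*_{\sigma'}$ is what makes the lifted generator land in $T_{iso}(k+1)$ on the nose), which is exactly where the paper's argument also hinges.
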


\begin{proof}

Let us fix a kernel-form presentation $L = I + O$, where $I$ is a sum of element of $T_{iso}(k)$ and $O$ is a sum of elements of $T_{os}(k)$. By Theorem \ref{thm:kerk}, it suffices to show that for each summand $i$ of $I$, and each summand $o$ of $O$, that $\Lift(i;H^*) \in T_{iso}(k+1)$ and $\Lift(o;H^*) \in T_{os}(k+1)$.

For the former, each summand of $i$ is of the form $c(G-G_{\sigma})$ for some constant $c$, $k$-vertex-labelled graph $G$, and some $\sigma \in S$. Then
$$
\Lift(c(G-G_{\sigma}); H^*) = c[\Lift(G;H^*)-\Lift(G_{\sigma};H^*)] = $$ $$ c[\Lift(G;H^*)-\Lift(G_{\sigma};H_{\sigma'}^*)]
$$
and this is an element of $T_{iso}(k+1)$ since $\Lift(G;H^*)$ is isomorphic to $\Lift(G_{\sigma};H_{\sigma'}^*)$ via the mapping $\sigma'$. An analogous argument proves the claim for $T_{os}(k+1)$.


\end{proof}

To show how we can use this, we provide an illustrative example originally introduced by Orellana and Scott:

\begin{cor}[\cite{ore}, Theorem 4.2]\label{cor:ore}
Suppose that $G$ is a graph with four vertices $u,v,w,z$ such that $uz, uw, zw, vw \in E(G)$ and $uv, zv \notin E(G)$, and suppose further that there exists an automorphism $\phi$ of $G-wz-wu$ such that $\phi(\{u,w\}) = \{v,z\}$ and $\phi(\{v,z\}) = \{u,w\}$. Then $G$ has the same chromatic symmetric function as $G-uw+vz$ (Figure 2 illustrates the relevant induced subgraphs of $G$ and $G-uw+vz$).
\end{cor}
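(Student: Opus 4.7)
The plan is to show that the difference $G - G'$, where $G' := G - uw + vz$, lies in $Ker(X_3)$ for an appropriate $3$-vertex-labelling of $V(G)$, and then project to $Ker(X)$ via Corollary \ref{cor:proj}. Taking $V_3 := V(G) \setminus \{u,v,w,z\}$, I note that the hypothesis $\phi(\{u,w\}) = \{v,z\}$ forces $\phi|_{\{u,v,w,z\}}$ to be either $(u\,v)(w\,z)$ or $(u\,z)(v\,w)$; I pick $V_1,V_2 \subseteq \{u,v,w,z\}$ so that $\phi|_{\{u,v,w,z\}}$ swaps within each of them, which makes $\phi \in S_{V_1} \times S_{V_2} \times S_{V_3}$.

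Next, I would combine two applications of the triangular modular relation (producing elements of $T_{os}(3)$). Applying it to the triangle $uwz$ in $G$ with apex $w$ yields $X_G + X_{G - uw - wz} = X_{G - uw} + X_{G - wz}$, and applying it to the triangle $vwz$ in $G'$ with apex $z$ yields $X_{G'} + X_{G' - vz - wz} = X_{G' - vz} + X_{G' - wz}$. Using the identifications $G' - vz = G - uw$, $G' - vz - wz = G - uw - wz$, and $G' - wz = G - uw + vz - wz$, subtracting collapses the common terms to give
\[
X_G - X_{G'} = X_{G - wz} - X_{G - uw + vz - wz}.
\]

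Finally, I would use $\phi$ to exhibit $(G - wz) - (G - uw + vz - wz)$ as an element of $T_{iso}(3)$. Writing $H := G - wz - wu$, we have $E(G - wz) = E(H) \cup \{wu\}$ and $E(G - uw + vz - wz) = E(H) \cup \{vz\}$; since $\phi$ fixes $E(H)$ as an automorphism, and since $\phi$ is a bijection sending $\{u,w\}$ to $\{v,z\}$, the edge $uw$ must map to the unique edge between the two elements of $\{v,z\}$, namely $vz$. Hence $\phi(E(G - wz)) = E(H) \cup \{vz\} = E(G - uw + vz - wz)$, so these two labelled graphs coincide under $\phi$. Combining the modular and isomorphism relations gives $G - G' \in Ker(X_3)$, and Corollary \ref{cor:proj} then yields the desired equality of chromatic symmetric functions.

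The main conceptual obstacle is to see that a uniform proof is possible despite the two possible restrictions of $\phi$ to $\{u,v,w,z\}$: the identity $\phi(uw) = vz$ is forced by $\phi(\{u,w\}) = \{v,z\}$ together with bijectivity, and this is precisely what makes the edge-set computation above go through in both cases. Beyond this observation, the only real work is the routine bookkeeping of edge sets under $\phi$.
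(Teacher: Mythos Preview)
Your proof is correct, and the underlying decomposition---two triangular modular relations plus one isomorphism via $\phi$---coincides with what the paper does (this is exactly the content of the paper's Figure~3). The packaging differs in two respects. First, the paper splits on the two possible restrictions of $\phi$ to $\{u,v,w,z\}$: when $\phi = (u\,v)(w\,z)$ it observes that $\phi$ is already an isomorphism $G \to G-uw+vz$ (since $\phi(uw)=vz$ and $\phi(zw)=zw$), disposing of that case trivially; only for $\phi = (u\,z)(v\,w)$ does it run the modular-relation argument. Your uniform treatment is legitimate because, as you note, $\phi(uw)=vz$ holds in both cases, but it obscures that one case is a direct isomorphism. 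Second, the paper's stated purpose here is to \emph{illustrate} Theorem~\ref{thm:pass}: it establishes the kernel-form presentation at the level of $\Gamma_2$ (the four-vertex graphs of Figure~2) and then invokes Theorem~\ref{thm:pass} to lift to the full graph, whereas you work directly in $\Gamma_3$ and exhibit $M_1 - M_2 + I \in \mathrm{span}(T_{os}(3) \cup T_{iso}(3))$ by hand. Your route is slightly more direct for this particular corollary; the paper's route is chosen to demonstrate the general lifting machinery.
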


\begin{proof}

We present here a proof different from that in \cite{ore} by showing how to derive the result using Theorem \ref{thm:pass}.

Given the conditions $\phi(\{u,w\}) = \{v,z\}$ and $\phi(\{v,z\}) = \{u,w\}$, there are four possibilities for the tuple $(\phi(u),\phi(v),\phi(w),\phi(z))$. It is easy to verify that two of these are not automorphisms even on these four vertices, so we are left with two possible cases, each of which is an involution on $\{u,v,w,z\}$. 

If $\phi$ exchanges $u$ and $v$, and exchanges $w$ and $z$, then $\phi(uw) = vz$ and $\phi(zw) = zw$, so the graphs $G$ and $G-uw+vz$ are isomorphic, and the result follows trivially.

Thus, from now on we suppose that $\phi$ exchanges $u$ and $z$, and exchanges $v$ and $w$. Then $\phi$ does not extend to an isomorphism of the graphs in question, since $\phi(zw) = uv$, and $zw \in E(G)$ but $uv \notin E(G-uw+vz)$.

Letting $H = G-uw+vz$, we wish to show that $G - H \in Ker(X)$. To take advantage of our assumption, we instead view each of $G$ and $H$ as $3$-vertex-labelled graphs on vertex set $V_1 \sqcup V_2 \sqcup V_3$, where $V_1 = \{u,z\}$, $V_2 = \{v,w\}$, and $V_3 = V(G)\bk\{V_1 \cup V_2\}$. Then by Corollary \ref{cor:proj}, it is sufficient to show that as $3$-vertex-labelled graphs, $G - H \in Ker(X_3)$. By Theorem \ref{thm:pass} and our assumption about the existence of $\phi$, it is sufficient to show that we may express the linear combination of four-vertex graphs depicted in Figure 2 as an element of $Ker(X_2)$ using only the triangular modular relation and elements of $T_{iso}(2)$ that arise from the restricted map $\phi$ swapping $u$ with $z$ and $v$ with $w$ simultaneously. This is done in Figure 3, where we note that the last two graphs are isomorphic via $\phi$, completing the proof.

\end{proof}

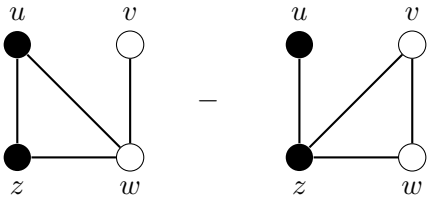
\begin{figure}[hbt]
\begin{center}
  \begin{tikzpicture}[scale=1.5]
    \node[label=below:{$z$}, fill=black, circle] at (0, 0)(1){};
    \node[label=above:{$u$}, fill=black, circle] at (0, 1)(2){};
    \node[label=above:{$v$}, draw, circle] at (1, 1)(3){};
    \node[label=below:{$w$}, draw, circle] at (1, 0)(4){};

    \draw[black, thick] (2) -- (1);
    \draw[black, thick] (1) -- (4);
    \draw[black, thick] (3) -- (4);
    \draw[black, thick] (2) -- (4);

    \draw (1.5, 0.5) coordinate (MI) node[right] { $\bf{-}$ };
    
    \node[label=below:{$z$}, fill=black, circle] at (2.5, 0)(5){};
    \node[label=above:{$u$}, fill=black, circle] at (2.5, 1)(6){};
    \node[label=above:{$v$}, draw, circle] at (3.5, 1)(7){};
    \node[label=below:{$w$}, draw, circle] at (3.5, 0)(8){};
    
    \draw[black, thick] (6) -- (5);
    \draw[black, thick] (7) -- (5);
    \draw[black, thick] (7) -- (8);
    \draw[black, thick] (8) -- (5);

  \end{tikzpicture}
\end{center}
\label{fig:x2}
\caption{The desired element of $Ker(X_2)$ to be extended via Corollary \ref{cor:ore}.}
\end{figure}

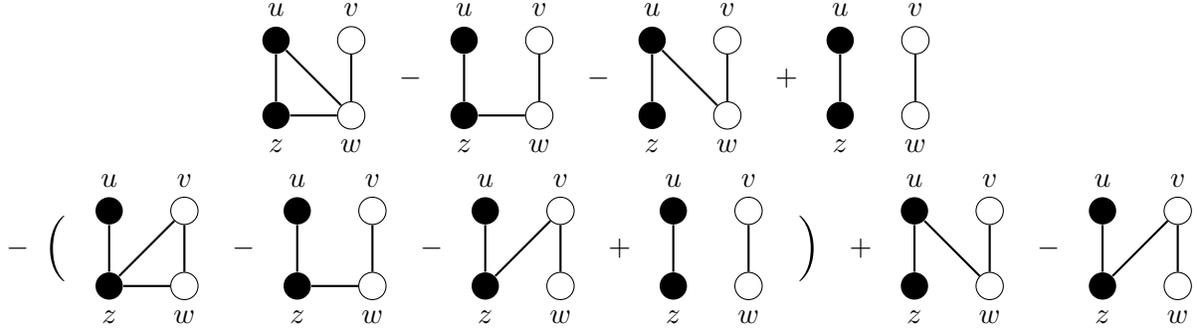
\begin{figure}[hbt]
\begin{center}
  \begin{tikzpicture}[scale=1]
    \node[label=below:{$z$}, fill=black, circle] at (0, 0)(1){};
    \node[label=above:{$u$}, fill=black, circle] at (0, 1)(2){};
    \node[label=above:{$v$}, draw, circle] at (1, 1)(3){};
    \node[label=below:{$w$}, draw, circle] at (1, 0)(4){};

    \draw[black, thick] (2) -- (1);
    \draw[black, thick] (1) -- (4);
    \draw[black, thick] (3) -- (4);
    \draw[black, thick] (2) -- (4);

    \draw (1.5, 0.5) coordinate (MI) node[right] { $\bf{-}$ };
    
    \node[label=below:{$z$}, fill=black, circle] at (2.5, 0)(5){};
    \node[label=above:{$u$}, fill=black, circle] at (2.5, 1)(6){};
    \node[label=above:{$v$}, draw, circle] at (3.5, 1)(7){};
    \node[label=below:{$w$}, draw, circle] at (3.5, 0)(8){};
    
    \draw[black, thick] (6) -- (5);
    \draw[black, thick] (7) -- (8);
    \draw[black, thick] (8) -- (5);
    
    \draw (4, 0.5) coordinate (MI2) node[right] { $\bf{-}$ };
    
    \node[label=below:{$z$}, fill=black, circle] at (5, 0)(z){};
    \node[label=above:{$u$}, fill=black, circle] at (5, 1)(u){};
    \node[label=above:{$v$}, draw, circle] at (6, 1)(v){};
    \node[label=below:{$w$}, draw, circle] at (6, 0)(w){};
    
    \draw[black, thick] (u) -- (z);
    \draw[black, thick] (v) -- (w);
    \draw[black, thick] (u) -- (w);
    
    \draw (6.5, 0.5) coordinate (PL) node[right] { $\bf{+}$ };
    
    \node[label=below:{$z$}, fill=black, circle] at (7.5, 0)(z2){};
    \node[label=above:{$u$}, fill=black, circle] at (7.5, 1)(u2){};
    \node[label=above:{$v$}, draw, circle] at (8.5, 1)(v2){};
    \node[label=below:{$w$}, draw, circle] at (8.5, 0)(w2){};
    
    \draw[black, thick] (u2) -- (z2);
    \draw[black, thick] (v2) -- (w2);

  \end{tikzpicture}
   \begin{tikzpicture}[scale=1]
   
   \draw (-1.5, 0.5) coordinate (MIM) node[right] { $\bf{-}$ };
   
    \draw (-1, 0.5) coordinate (LP) node[right] { $\bf{\text{\Huge (}}$ };
   
    \node[label=below:{$z$}, fill=black, circle] at (0, 0)(1){};
    \node[label=above:{$u$}, fill=black, circle] at (0, 1)(2){};
    \node[label=above:{$v$}, draw, circle] at (1, 1)(3){};
    \node[label=below:{$w$}, draw, circle] at (1, 0)(4){};

    \draw[black, thick] (2) -- (1);
    \draw[black, thick] (1) -- (4);
    \draw[black, thick] (3) -- (4);
    \draw[black, thick] (1) -- (3);

    \draw (1.5, 0.5) coordinate (MI) node[right] { $\bf{-}$ };
    
    \node[label=below:{$z$}, fill=black, circle] at (2.5, 0)(5){};
    \node[label=above:{$u$}, fill=black, circle] at (2.5, 1)(6){};
    \node[label=above:{$v$}, draw, circle] at (3.5, 1)(7){};
    \node[label=below:{$w$}, draw, circle] at (3.5, 0)(8){};
    
    \draw[black, thick] (6) -- (5);
    \draw[black, thick] (7) -- (8);
    \draw[black, thick] (8) -- (5);
    
    \draw (4, 0.5) coordinate (MI2) node[right] { $\bf{-}$ };
    
    \node[label=below:{$z$}, fill=black, circle] at (5, 0)(z){};
    \node[label=above:{$u$}, fill=black, circle] at (5, 1)(u){};
    \node[label=above:{$v$}, draw, circle] at (6, 1)(v){};
    \node[label=below:{$w$}, draw, circle] at (6, 0)(w){};
    
    \draw[black, thick] (u) -- (z);
    \draw[black, thick] (v) -- (w);
    \draw[black, thick] (v) -- (z);
    
    \draw (6.5, 0.5) coordinate (PL) node[right] { $\bf{+}$ };
    
    \node[label=below:{$z$}, fill=black, circle] at (7.5, 0)(z2){};
    \node[label=above:{$u$}, fill=black, circle] at (7.5, 1)(u2){};
    \node[label=above:{$v$}, draw, circle] at (8.5, 1)(v2){};
    \node[label=below:{$w$}, draw, circle] at (8.5, 0)(w2){};
    
    \draw[black, thick] (u2) -- (z2);
    \draw[black, thick] (v2) -- (w2);
    
    \draw (9, 0.5) coordinate (RP) node[right] { $\bf{\text{\Huge )}}$ };

  \end{tikzpicture}
  \begin{tikzpicture}[scale=1]
  
  \draw (-1, 0.5) coordinate (PL) node[right] { $\bf{+}$ };
  
  \node[label=below:{$z$}, fill=black, circle] at (0, 0)(z){};
    \node[label=above:{$u$}, fill=black, circle] at (0, 1)(u){};
    \node[label=above:{$v$}, draw, circle] at (1, 1)(v){};
    \node[label=below:{$w$}, draw, circle] at (1, 0)(w){};
    
    \draw[black, thick] (u) -- (z);
    \draw[black, thick] (v) -- (w);
    \draw[black, thick] (u) -- (w);

    \draw (1.5, 0.5) coordinate (MI) node[right] { $\bf{-}$ };
    
    \node[label=below:{$z$}, fill=black, circle] at (2.5, 0)(5){};
    \node[label=above:{$u$}, fill=black, circle] at (2.5, 1)(6){};
    \node[label=above:{$v$}, draw, circle] at (3.5, 1)(7){};
    \node[label=below:{$w$}, draw, circle] at (3.5, 0)(8){};
    
    \draw[black, thick] (6) -- (5);
    \draw[black, thick] (7) -- (8);
    \draw[black, thick] (7) -- (5);

  \end{tikzpicture}
\end{center}
\label{fig:kerx2}
\caption{The combination in Figure 2 expressed in appropriate kernel-form presentation.}
\end{figure}

Let us briefly summarize what this example illustrates. Suppose that $L = c_1H_1 + \dots + c_mH_m$ is an element of $Ker(X)$ in which every graph has the same vertex set $V$. In previous work by the first and last authors \cite{modular}, we give a necessary and sufficient condition for $L$ to be a \emph{modular relation}, meaning that it satisfies the stronger property that for any graph $G$ with $V(G) \supseteq V(H)$ we have that $Ext(L;G) \in Ker(X)$, providing a characterization of when we can universally extend certain elements of $Ker(X)$ to larger ones. 

But there are some natural instances where we do not need our kernel elements to have a universal extension to any graph, but simply to a sufficiently nice class of graphs. Consider the linear combination $H_1-H_2$ from Figure 2 as an element of $Ker(X)$ without the vertex groupings. It can be shown that $H_1-H_2$ is not a modular relation (in fact for two graphs $G_1$ and $G_2$, $G_1-G_2$ is never a modular relation unless $G_1 = G_2$). However, with Theorem \ref{thm:pass}, we can nonetheless determine a class of graphs with certain symmetries such that we may lift $H_1-H_2$ to larger elements of $Ker(X)$ using graphs in the class.

\subsection{Homogeneous Partitions}

An important special case of Theorem \ref{thm:pass} is that whenever $L \in Ker(X_k)$ is a linear combination of graphs with vertex set $V_1 \sqcup \dots \sqcup V_k$, and $H^*$ is a $(k+1)$-vertex-labelled graph with vertex set $V_1 \sqcup \dots \sqcup V_{k+1}$ that is fixed under \emph{every} permutation in $S_{V_1} \times \dots \times S_{V_k} \times Id_{V_{k+1}}$ (where $Id_{V_{k+1}}$ is the identity permutation of $V_{k+1}$), then $\Lift(L;H^*)$ is \emph{always} an element of $Ker(X_{k+1})$.

\begin{definition}
    In a graph $G$, a partition $V(G) = V_1 \sqcup \dots \sqcup V_k \sqcup W$ is called \emph{homogeneous} if for every $i \in \{1,2,\dots,k\}$ and every $w \in W$, we have that $w$ is either complete or anticomplete to $V_i$ (as defined in Section 2.2). In this case, we say that $V_1, \dots, V_k$ form a \emph{homogeneous collection} in $G$. In the particular case $k=1$ we call $V_1$ a \emph{homogeneous set}, and when $k=2$ we call $V_1 \sqcup V_2$ a \emph{homogeneous pair}.
\end{definition}

Note that whether $V_1 \sqcup \dots \sqcup V_k$ is a homogeneous collection only depends on edges from the $V_i$ to $W$, and not on edges with both endpoints in $W$ or with both endpoints among the $V_i$. 

From the above discussion and Theorem \ref{thm:pass}, we may easily derive the following corollary.

\begin{cor}\label{cor:homog}

Let $L = \sum c_iH_i \in Ker(X_k)$ be a linear combination of graphs with common vertex set $V_1 \sqcup \dots \sqcup V_k$, and let $H^*$ be a $(k+1)$-vertex-labelled graph with vertex set $V(H_i) \sqcup V_{k+1}$ such that $V_1 \sqcup \dots \sqcup V_k$ is a homogeneous collection in $H^*$. Then $\Lift(L;H^*)$ is an element of $Ker(X_{k+1})$.
\end{cor}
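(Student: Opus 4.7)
The plan is to derive the corollary from the special case of Theorem \ref{thm:pass} that is stated in the paragraph immediately preceding the corollary. The key observation is that homogeneity of $V_1 \sqcup \dots \sqcup V_k$ in $H^*$ gives precisely the invariance needed to apply that special case.

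First, I would translate the homogeneity hypothesis into a permutation-invariance condition. For any $\sigma \in S_{V_1} \times \dots \times S_{V_k}$, extend $\sigma$ by the identity on $V_{k+1}$ to a permutation $\sigma'$ of $V(H^*)$. Then $\sigma'$ preserves (as a set) the edges of $H^*$ between $V$ and $V_{k+1}$: indeed, for $v \in V_i$ and $w \in V_{k+1}$, whether $vw \in E(H^*)$ depends only on $i$ and $w$ (complete or anticomplete is the defining feature of homogeneity), so $\sigma(v)w \in E(H^*)$ iff $vw \in E(H^*)$. The identity on $V_{k+1}$ trivially preserves the edges of $H^*$ within $V_{k+1}$.

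Second, to deal with the remaining edges of $H^*$ within $V$ (which need not be preserved by $\sigma'$), I would introduce the auxiliary augmentation $H^{**}$ obtained from $H^*$ by deleting every edge with both endpoints in $V$. Then $H^{**}$ is genuinely fixed as a labelled graph by every $\sigma' \in S_{V_1} \times \dots \times S_{V_k} \times \mathrm{Id}_{V_{k+1}}$: there are no within-$V$ edges to worry about, the crossing edges are preserved by the argument above, and the within-$V_{k+1}$ edges are preserved by the identity. Hence the special case of Theorem \ref{thm:pass} (as stated in the preceding discussion) gives $\Lift(L;H^{**}) \in Ker(X_{k+1})$ for any $L \in Ker(X_k)$, applied with the full group $S_{V_1} \times \dots \times S_{V_k}$ as a sufficient permutation set.

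Third, I would pass from $H^{**}$ back to $H^*$ by adding the within-$V$ edges one at a time, using the identity $\Lift(H_i;H^*) = \Lift(H_i;H^{**}) \cup E(H^*|_V)$ as simple graphs. The aim is to show by induction on the added edges that $\Lift(L;H^*) \in Ker(X_{k+1})$. At each step, the deletion-contraction relation from Theorem \ref{theorem:delcon} applied to the newly added edge $e$ produces a correction term of the form $\sum_{i : e \notin E(H_i)} c_i \, X_{k+1}\!\left(\Lift(H_i;H^{**}_{j-1})/e\right)$, and the inductive step reduces to showing this correction vanishes.

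The hardest part is this last step: establishing that the correction terms vanish does not follow from $L \in Ker(X_k)$ alone (it would amount to $L$ being a modular relation), so the argument must exploit how the contraction interacts with the homogeneous crossing structure. The natural approach is to observe that $\Lift(H_i;H^{**}_{j-1})/e = \Lift(H_i/e; H^{**}_{j-1}/e)$ and that $H^{**}_{j-1}/e$ still has a homogeneous collection with respect to $V_{k+1}$ (with the merged vertex inheriting the homogeneous adjacency of its endpoints in a natural tuple-weighted sense), so that the inductive hypothesis applies to a contracted version of $L$ in the tuple-weighted setting developed earlier in the paper.
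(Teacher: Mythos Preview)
Your Steps 1 and 2 are exactly the paper's argument. The paper derives the corollary in one sentence from the special case stated immediately before it: if $V_1 \sqcup \dots \sqcup V_k$ is a homogeneous collection in $H^*$, then every $\sigma' \in S_{V_1}\times\dots\times S_{V_k}\times \mathrm{Id}_{V_{k+1}}$ fixes $H^*$ as a labelled graph, so Theorem~\ref{thm:pass} applies with $S$ taken to be the full group. Your translation of homogeneity into permutation-invariance of the crossing edges is precisely this observation.

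Your Step 3, however, is both unnecessary and unsalvageable. You are trying to accommodate within-$V$ edges of $H^*$ that are \emph{not} invariant under $S_{V_1}\times\dots\times S_{V_k}$, but under that reading the corollary is simply false. Indeed, for such an $H^*$ we have (up to replacing multi-edges by simple edges)
\[
X_{k+1}\bigl(\Lift(L;H^*)\bigr)=X_{k+1}\bigl(\Lift(Ext(L;\,H^*|_V);\,H^{**})\bigr),
\]
and $Ext(L;\,H^*|_V)$ need not lie in $Ker(X_k)$; this is exactly the failure of $L$ to be a modular relation that you yourself flag. Your proposed rescue via contraction does not work either: after contracting $e$, the relevant sum runs only over those $i$ with $e\notin E(H_i)$, so it is not an image of $L$ under any kernel-preserving map, and there is no reason for the correction terms to cancel.

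In short: the paper reads ``homogeneous collection in $H^*$'' as forcing $H^*=H^*_{\sigma'}$ for all $\sigma'$ (in applications $H^*|_V$ is edgeless), and the proof is the one line you give in Step 2. Your $H^{**}$ is already the $H^*$ the paper has in mind; Step 3 is chasing a generality that the statement does not (and cannot) carry.
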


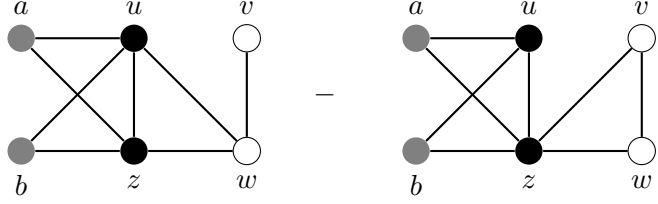
\begin{figure}[hbt]
\begin{center}
  \begin{tikzpicture}[scale=1.5]
    \node[label=below:{$z$}, fill=black, circle] at (0, 0)(1){};
    \node[label=above:{$u$}, fill=black, circle] at (0, 1)(2){};
    \node[label=above:{$v$}, draw, circle] at (1, 1)(3){};
    \node[label=below:{$w$}, draw, circle] at (1, 0)(4){};
    
    \node[label=above:{$a$}, fill=gray, circle] at (-1, 1)(a){};
    \node[label=below:{$b$}, fill=gray, circle] at (-1, 0)(b){};

    \draw[black, thick] (2) -- (1);
    \draw[black, thick] (1) -- (4);
    \draw[black, thick] (3) -- (4);
    \draw[black, thick] (2) -- (4);
    \draw[black, thick] (b) -- (1);
    \draw[black, thick] (b) -- (2);
    \draw[black, thick] (1) -- (a);
    \draw[black, thick] (2) -- (a);

    \draw (1.5, 0.5) coordinate (MI) node[right] { $\bf{-}$ };
    
    \node[label=below:{$z$}, fill=black, circle] at (3.5, 0)(5){};
    \node[label=above:{$u$}, fill=black, circle] at (3.5, 1)(6){};
    \node[label=above:{$v$}, draw, circle] at (4.5, 1)(7){};
    \node[label=below:{$w$}, draw, circle] at (4.5, 0)(8){};
    
    \node[label=above:{$a$}, fill=gray, circle] at (2.5, 1)(c){};
    \node[label=below:{$b$}, fill=gray, circle] at (2.5, 0)(d){};
    
    \draw[black, thick] (6) -- (5);
    \draw[black, thick] (7) -- (5);
    \draw[black, thick] (7) -- (8);
    \draw[black, thick] (8) -- (5);
    \draw[black, thick] (c) -- (5);
    \draw[black, thick] (c) -- (6);
    \draw[black, thick] (d) -- (6);
    \draw[black, thick] (d) -- (5);

  \end{tikzpicture}
\end{center}
\label{fig:homog}
\caption{An example of nonisomorphic graphs with equal chromatic $3$-multisymmetric function arising from lifting an element of $Ker(X_2)$ to $Ker(X_3)$ with a homogeneous pair. Note that $a$ and $b$ are complete to $\{u,z\}$ and anticomplete to $\{v,w\}$. Deleting $b$ from both graphs and removing the labels yields the smallest example of nonisomorphic graphs with equal chromatic symmetric function \cite{stanley}.}
\end{figure}

Thus, if we are working within a graph class in which we can find homogeneous collections, we can simplify or reduce problems for chromatic symmetric functions in that class.

Implicitly, this was part of the approach taken by Guay-Paquet in \cite{guay} when reducing the Stanley-Stembridge conjecture that incomparability graphs of $(3+1)$-free posets are $e$-positive: 

\begin{theorem}[\cite{guay}, Theorem 5.1]\label{thm:guay}

Suppose that $G$ is the incomparability graph of a $(3+1)$-free poset. Then $X_G$ may be written as a convex combination of the chromatic symmetric functions of graphs that are incomparability graphs of $(3+1)$- and $(2+2)$-free posets.

\end{theorem}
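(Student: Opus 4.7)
The plan is to induct on the number of induced $C_4$ subgraphs of $G$. When $G$ contains none, the underlying poset is already $(2+2)$-free and $X_G$ is its own one-term convex combination. Otherwise, fix an induced $C_4$ and invoke the structural lemma stated in Section 5.3 (implicit in \cite{clawposet}): since $G$ is the incomparability graph of a $(3+1)$-free poset, the four vertices of this $C_4$ lie inside a homogeneous pair of cliques $V_1 \sqcup V_2 \subseteq V(G)$, with two vertices of the $C_4$ in each $V_i$. Regard $G$ as a $3$-vertex-labelled graph on $V_1 \sqcup V_2 \sqcup W$, where $W = V(G) \bk (V_1 \cup V_2)$.

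The key step is to build a local relation in $Ker(X_2)$ on the vertex set $V_1 \sqcup V_2$. Concretely, we look for $2$-vertex-labelled graphs $H_1, \dots, H_r$ on $V_1 \sqcup V_2$, each keeping $V_1, V_2$ as cliques and each realizable as the $V_1 \cup V_2$-induced subgraph of some incomparability graph of a $(3+1)$-free poset, together with nonnegative coefficients $c_1, \dots, c_r$ summing to $1$, such that with $H_0 := G|_{V_1 \cup V_2}$,
\[
L \;:=\; H_0 - \sum_{i=1}^r c_i H_i \;\in\; Ker(X_2),
\]
and each $H_i$ contains strictly fewer induced $C_4$'s than $H_0$. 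Since $H_0$ and each $H_i$ are determined by their bipartite edges across the $V_1/V_2$ split, this reduces to a finite identity that can be verified by comparing the $\tm$-expansions given in Lemma \ref{lem:mbasis}; producing such a convex decomposition is the combinatorial core of \cite{guay}.

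The multisymmetric machinery then automates the rest. Because $V_1 \sqcup V_2$ is a homogeneous pair in $G$, the $3$-vertex-labelled augmentation $H^*$ recording how $W$ attaches to $V_1, V_2$ together with $G|_W$ is fixed by every permutation in $S_{V_1} \times S_{V_2} \times Id_W$, so Corollary \ref{cor:homog} yields $\Lift(L;H^*) \in Ker(X_3)$. Projecting via Corollary \ref{cor:proj} with $\pi_{\Gamma_2} \circ \pi_{\Gamma_3}$ gives
\[
X_G \;=\; \sum_{i=1}^r c_i X_{G_i},
\]
where $G_i$ is $G$ with $G|_{V_1 \cup V_2}$ replaced by $H_i$. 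By our compatibility condition each $G_i$ is still the incomparability graph of a $(3+1)$-free poset, and it has strictly fewer induced $C_4$'s than $G$, so induction completes the proof. The main obstacle is the local relation in the second paragraph: one must characterize which bipartite patterns between two cliques can appear in an incomparability graph of a $(3+1)$-free poset and show that any such pattern containing a $C_4$ lies in the convex hull, modulo $Ker(X_2)$, of $C_4$-free patterns in the same family. The framework of this paper is built precisely so that the lifting and projection in the third paragraph become mechanical once this single local identity is in hand.
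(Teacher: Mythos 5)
Your overall strategy is the same as the paper's: induct on the number of induced $C_4$'s, locate a homogeneous pair of cliques containing a given $C_4$ via the structural lemma, build a local convex relation in $Ker(X_2)$ on $V_1 \sqcup V_2$ (deferring its construction to \cite{guay}, as the paper also does), lift it with Corollary \ref{cor:homog}, and project with Corollary \ref{cor:proj}. That part is fine. But the final paragraph contains a genuine gap: you claim that ``by our compatibility condition each $G_i$ is still the incomparability graph of a $(3+1)$-free poset, and it has strictly fewer induced $C_4$'s than $G$,'' where your compatibility condition is purely local --- each $H_i$ is realizable as the $V_1 \cup V_2$-induced subgraph of \emph{some} incomparability graph of a $(3+1)$-free poset, and each $H_i$ has fewer induced $C_4$'s than $H_0$. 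Neither global conclusion follows from these local hypotheses. Replacing $G|_{V_1 \cup V_2}$ by $H_i$ could create an induced $(3+1)$ or a new induced $C_4$ that uses vertices from both $V_1 \cup V_2$ and $W$; the local realizability of $H_i$ in some other ambient graph says nothing about how $H_i$ interacts with the particular attachment of $W$ in $G$.

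The paper closes exactly this gap with Lemmas \ref{lem:3+1} and \ref{lem:2+2}, and both require real work: one must exhibit an explicit poset $Q$ for the modified graph and check transitivity and $(3+1)$-freeness using the homogeneity of the pair, and the proof that no new induced $(2+2)$ appears crucially uses the \emph{maximality} of the square-connected pair $(V_1,V_2)$ constructed in Lemma \ref{lem:sqcon} (a putative new $(2+2)$ straddling $V_1 \sqcup V_2$ and $W$ would let one enlarge the pair, a contradiction). Your proof as written does not invoke any maximality property of the homogeneous pair, so it has no mechanism to rule out new $C_4$'s, and the induction could fail to terminate. To repair the argument you need to (i) choose the homogeneous pair with the extremal properties of Lemma \ref{lem:sqcon}, not just any homogeneous pair of cliques containing the $C_4$, and (ii) prove the global statements of Lemmas \ref{lem:3+1} and \ref{lem:2+2} for the specific replacement graphs $G_k$ rather than deducing them from local realizability.
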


This theorem demonstrates that to prove the Stanley-Stembridge conjecture, it is sufficient to determine the $e$-positivity of incomparability graphs of $(3+1)$- and $(2+2)$-free posets, or equivalently unit interval graphs. 

Guay-Paquet's proof of Theorem \ref{thm:guay} relied heavily on the structure theorem for incomparability graphs of $(3+1)$-free posets established in \cite{clawposet}. In what follows we summarize his proof approach in \cite{guay}, rewriting relevant portions using the methodology developed thus far, in the process creating a proof that does not require reference to the stronger structure theorem of \cite{clawposet}. This will be summarized in five steps.

We first give a brief overview of poset notation for clarity: a poset $P = (V,<_P)$ consists of a set $V$ of vertices, and a partial order $<_P$ on $V$. Elements $v,w \in P$ are \emph{comparable} if $v <_P w$ or $w <_P v$, and otherwise they are \emph{incomparable}. The incomparability graph of a poset $P$ is a simple graph with the same vertex set $V$, and where two distinct vertices $v$ and $w$ are adjacent if and only if they are incomparable in $P$. 

The \emph{$(3+1)$ poset} has vertices $a,b,c,d$ with partial order $a <_P b <_P c$ (and $d$ incomparable with $a,b,c$). The \emph{$(2+2)$ poset} has the same vertex set, but with partial order $a <_P c$ and $b <_P d$, and no other relations. The notions of poset isomorphism, induced posets, and $P$-free posets are exactly analogous to the corresponding graph notions.

\begin{enumerate}
    \item Suppose that $G$ is the incomparability graph of a $(3+1)$-free poset $P$, and that $G$ contains an induced $C_4$ (meaning that $P$ contains an induced $(2+2)$ poset). Then the vertices of this $C_4$ may be labelled $a,b,c,d$ such that there exists a homogeneous pair of cliques $V_1 \sqcup V_2$ in $G$ with $a,b \in V_1$ and $c,d \in V_2$. This was implicitly demonstrated by Guay-Paquet, Morales, and Rowland in \cite{clawposet} as a consequence of their structure theorem for such graphs; we give a direct proof below from the point of view of the poset $P$.
    
    \begin{lemma}\label{lem:sqcon}
    
    Let $P$ be a $(3+1)$-free poset, and let $a,b,c,d$ be an induced $(2+2)$ poset, with $a <_P c$ and $b <_P d$ (so $a$ is incomparable with $b$ and $d$, and $b$ is incomparable with $c$).
    
    Let $V_1,V_2 \subseteq P$ be maximal with respect to inclusion such that
    \begin{enumerate}
        \item $a,b \in V_1$ and $c,d \in V_2$.
        \item All elements of $V_1$ are pairwise incomparable, and all elements of $V_2$ are pairwise incomparable.
        \item For every nonempty $A \subsetneq V_1$, there exists an induced $(2+2)$ poset with one vertex in $A$, one vertex in $V_1 \bk A$, and two in $V_2$.
        \item For every nonempty $B \subsetneq V_2$, there exists an induced $(2+2)$ poset with one vertex in $B$, one vertex in $V_2 \bk B$, and two vertices in $V_1$.
         \item For every $x \in V_1$ and every $y \in V_2$, either $x <_P y$ or $x$ and $y$ are incomparable.
    \end{enumerate}
    
    Then $V_1$ and $V_2$ are a homogeneous pair of cliques in the incomparability graph $G$ of $P$, meaning that in the poset they satisfy:
    
    \begin{enumerate}[resume]
        \item For every $v \in P \bk (V_1 \cup V_2)$, $v$ is either comparable to every element of $V_1$ or incomparable with every element of $V_1$, and likewise for $V_2$.
    \end{enumerate}
    
    \end{lemma}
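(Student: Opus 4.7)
The plan is a proof by contradiction, leveraging the maximality of $V_1, V_2$ together with the $(3+1)$-free hypothesis to derive a structural obstruction whenever some $v \in P \setminus (V_1 \cup V_2)$ fails to be homogeneous over $V_1$ (the case for $V_2$ will be symmetric). Suppose, aiming for a contradiction, that there exist $v \in P \setminus (V_1 \cup V_2)$ and $x_1, x_2 \in V_1$ such that $v$ is comparable to $x_1$ in $P$ while $v$ is incomparable to $x_2$ in $P$.

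Since condition (e) only orients pairs across $V_1 \times V_2$, the relationship between $v$ and $x_1$ is a priori either $v <_P x_1$ or $x_1 <_P v$, and the argument will split along this distinction. In each case, I would invoke condition (c) on the singleton $A = \{x_1\}$ to obtain an induced $(2+2)$ poset on vertices $\{x_1, x', y_1, y_2\}$ with $x' \in V_1 \setminus \{x_1\}$ and $y_1, y_2 \in V_2$. Because (e) forbids any $y <_P x$ with $x \in V_1, y \in V_2$, the orientation of this $(2+2)$ is forced: after possibly renaming the $y_i$, one has $x_1 <_P y_1$ and $x' <_P y_2$ with $(x_1,y_2)$ and $(x',y_1)$ incomparable (and $x_1, x'$ incomparable and $y_1, y_2$ incomparable by (b)).

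The meat of the argument is then to propagate the comparability $v \sim x_1$ through this $(2+2)$ and through $x_2$. In the subcase $x_1 <_P v$: examining the possible relations between $v$ and $y_1$, between $v$ and $x'$, and between $v$ and $y_2$, each alternative either (i) produces a chain of length three together with a fourth vertex incomparable to all three of its elements, giving the forbidden $(3+1)$; (ii) forces a comparability within $V_1$ or within $V_2$, contradicting (b); or (iii) forces a comparability in the wrong direction across $V_1 \times V_2$, contradicting (e). In the most delicate branches, the element $x_2$ itself serves as the isolated vertex of the $(3+1)$ once a chain through $v, x_1$, and a suitable $y_i$ is assembled; in others, one obtains a contradiction with the maximality of $V_2$ by verifying that $V_2 \cup \{v\}$ would satisfy (a)--(e). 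The subcase $v <_P x_1$ is dual: there, one uses condition (d) on a singleton $B = \{y\}$ to extract the analogous $(2+2)$ in the other direction and runs the same case analysis on the downward chain.

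The main obstacle is the sheer branching of the case analysis: a single application of (c) or (d) does not always immediately yield either a $(3+1)$ or a contradiction with (b), (e), or maximality. In those residual branches, my plan is to iterate the propagation step, using the new comparability obtained (e.g., a forced $v <_P y_i$) to apply (c) or (d) again with a different subset, so that one eventually reaches either an induced $(3+1)$ on four explicit vertices or a witness that $V_1$ or $V_2$ admits a proper enlargement still satisfying (a)--(e), contradicting maximality. The $V_2$-half of (f) follows by the same argument with the roles of $V_1$ and $V_2$ interchanged, noting that (e) still pins down the direction of any cross-comparability that appears.
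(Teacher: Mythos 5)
Your overall strategy --- contradiction via maximality, extracting induced $(2+2)$s from conditions (c)/(d), and assembling forbidden $(3+1)$s --- matches the paper's, but as written the plan has a genuine gap in the one place where the argument actually has to do work: the choice of subsets fed into (c) and (d). Applying (c) to the singleton $A=\{x_1\}$ gives you a $(2+2)$ on $\{x_1,x',y_1,y_2\}$, but you have no control over how $v$ relates to $x'$, $y_1$, or $y_2$, so there is no guarantee that any branch of your case analysis closes; this is precisely why you find yourself needing to ``iterate the propagation step,'' and you give no argument that the iteration terminates. The paper instead partitions $V_2$ into the neighbors and nonneighbors of $v$ and applies (d) with $B$ equal to the set of nonneighbors of $v$ in $V_2$: the resulting $(2+2)$ is then forced to straddle that boundary, i.e., to contain one vertex of $V_2$ comparable to $v$ and one incomparable to $v$, which is exactly what is needed to build a three-chain plus an isolated fourth vertex in a single step. (A preliminary observation --- that no two elements of $V_2$ can lie on opposite sides of $v$, since elements of $V_2$ are pairwise incomparable --- then pins down the direction of all of $v$'s comparabilities into $V_2$ simultaneously, rather than one pair at a time.) The same device with $A$ equal to the set of nonneighbors of $v$ in $V_1$ is what disposes of the mixed-on-$V_1$ possibility, via a violation of (c) rather than via a $(3+1)$.

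A second, smaller gap: in the branch where $v$ is to be absorbed into $V_1$ or $V_2$, verifying that the enlarged pair still satisfies (a)--(e) is not routine. Conditions (c)/(d) must be checked for the partitions that separate $v$ from the rest of its new block, and this requires explicitly constructing a fresh induced $(2+2)$ containing $v$ (the paper does this by applying (d) once more and splicing $v$ into the resulting configuration, using the already-established comparability $v <_P y$). Your sketch treats this verification as an afterthought, but without it the maximality contradiction does not go through.
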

    
    Conditions (a), (b), and (f) are the ones we want $V_1$ and $V_2$ to satisfy, and condition (e) will be convenient for later steps. 
    
    Conditions (c) and (d) are the poset version of the notion of a \emph{square-connected} homogeneous pair of cliques in a graph, meaning a homogeneous pair of cliques $V_1 \sqcup V_2$ such that for every partition of one of the $V_i$ into two nonempty parts $A \sqcup B$ for $i \in \{1,2\}$, there exists an induced $C_4$ with one vertex in $A$, one vertex in $B$, and two vertices in $V_{3-i}$. 
    
    The notion of square-connected homogeneous cliques occurs naturally in the study of claw-free perfect graphs \cite{cemil}, where they are used to find homogeneous pairs. Here, it is used to provide necessary structure for the proof.
    
    \begin{proof}
    
    Throughout, we will switch between the graph and poset perspectives. In particular, when we say that $v, w \in V$ are neighbors, we mean they are adjacent in $G$, so are incomparable in $P$.
    
    Note that there is at least one choice of $(V_1,V_2)$ that satisfies (a), (b), (c), (d), and (e) above, since we may take $V_1 = \{a,b\}$ and $V_2 = \{c,d\}$. Among all such choices, choose one such that $V_1$ and $V_2$ are maximal with respect to inclusion. We prove that such a choice satisfies (f).
    
    We suppose otherwise for a contradiction; without loss of generality, we assume there exists $v \in V \bk (V_1 \cup V_2)$ such that $v$ has at least one neighbor and at least one nonneighbor in $V_2$ (the case replacing $V_2$ by $V_1$ can be done analogously by reversing $<_P$).
    
    Note that there do not exist $y,y' \in V_2$ such that $y <_P v <_P y'$ since $y$ and $y'$ are incomparable by assumption. 
    
    Suppose first that all of the nonneighbors of $v$ in $V_2$ are $<_P$-smaller than $v$. Then applying property (d), choosing $B$ to be the set of nonneighbors of $v$, there exist $y \in B$, $y' \in V_2 \bk B$, and $x,x' \in V_1$ that form an induced $(2+2)$ with $x <_P y$ and $x' <_P y'$. But then $x, y, v$ form a chain, and together with $y'$ the four vertices form an induced $(3+1)$, which is a contradiction. 
    
    Thus, $v$ is $<_P$-smaller than all of its nonneighbors in $V_2$. Applying (d) as above again, we find $x,x',y,y'$ such that $v <_P y$ and $x <_P y$, and $x, v, y$ all incomparable with $y'$. Then $x$ is not comparable with $v$, as otherwise $x,v,y,y'$ would form an induced $(3+1)$. Thus, $v$ has at least one neighbor in $V_1$.
    
    Suppose now that $v$ has at least one nonneighbor in $V_1$. By the same proof as above, $v$ is not $<_P$-smaller than its nonneighbors in $V_1$, so $v$ must be larger than all of its nonneighbors in $V_1$. Let $N(V_1)$ and $N(V_2)$ be the sets of nonneighbors of $v$ in $V_1$ and $V_2$ respectively. Then for $x \in N(V_1)$ and $y \in N(V_2)$, we have $x <_P y$ by transitivity. On the other hand, for $x \in N(V_1)$ and $y' \in V_2 \bk N(V_2)$, if $y'$ is incomparable with $v$, and it is not the case that $y'$ is incomparable to $x$, then for every $y \in N(V_2)$, we have that $x,v,y,y'$ forms an induced $(3+1)$. Thus, for all such $x$ and $y'$, we have $x <_P y'$ by property (e). But this means that property (c) is violated when choosing $A = N(V_1)$, since there are no edges between $N(V_1)$ and $V_2$, a contradiction.
    Thus, $v$ has no nonneighbors in $V_1$.
    
    Then it is easy to verify that the pair $(V_1 \cup \{v\}, V_2)$ satisfies properties (a), (b), (d), and (e).
    
    We now show that it also satisfies (c). Clearly (c) is satisfied whenever both $A$ and $(V_1 \cup \{v\}) \bk A$ contain vertices other than $v$ since $(V_1,V_2)$ satisfies (c), so it suffices to show that an induced $(2+2)$-poset exists containing $v$, a vertex of $V_1$, and two vertices of $V_2$.
    Applying property (d) to $(V_1,V_2)$ as before with $B = N(V_2)$, we obtain $x, x', y, y'$ with $x' <_P y'$ and $x'$ incomparable with $y$. Then as $v <_P y$ and $v$ is incomparable with $y'$, the vertices $v,x',y,y'$ form an induced $(2+2)$.
    
    Therefore, $(V_1 \cup \{v\}, V_2)$ satisfies (a), (b), (c), (d), and (e), contradicting the maximality of $(V_1,V_2)$. It follows that our choice of a vertex $v$ which has both a neighbor in $V_2$ and a nonneighbor in $V_2$ is impossible. Applying the proof again for $V_1$ with $<_P$ reversed, the conclusion follows that $(V_1,V_2)$ satisfies (f).
    
    \end{proof}
    
    
    
     
     
     \item We have found a pair of homogeneous cliques $V_1$ and $V_2$ in $G$. Let $|V_1| = m$ and $|V_2| = n$, and suppose without loss of generality that $m \leq n$ (the other case is analogous). Let the vertices of $V_1 \sqcup V_2$ be $v_1, \dots, v_m, w_1, \dots, w_n$. Define the graphs $G_k$ for $k \in \{0,1,\dots,m\}$ to have vertex set $V_1 \sqcup V_2$ and edge set
    $\{v_iv_j: 1 \leq i < j \leq m\} \cup \{w_iw_j: 1 \leq i < j \leq n\} \cup \{v_iw_j: 0 \leq i \leq k, \textnormal{ all } j\}$. 
    
    Guay-Paquet uses the triangular modular relation of $Ker(X)$ and a novel linear algebraic argument to show that the chromatic symmetric function of $G|_{V_1 \sqcup V_2}$ may be written as a convex combination of the chromatic symmetric functions of the $G_k$, such that the coefficient of $X_{G_k}$ in $X_{G|_{V_1 \sqcup V_2}}$ is equal to the probability that a randomly chosen map $L: \{1,\dots,m\} \rightarrow \{1,\dots,n\}$ will satisfy that exactly $k$ of the pairs $v_jw_{L(j)}$ are edges of $H$ \cite[Sections 4-5]{guay}. 
    
    It is not hard to verify that the same argument holds entirely analogously in the $2$-vertex-labelled setting for the coefficient of $X_{(G_k,2)}$ in $X_{(G|_{V_1 \sqcup V_2, 2)}}$ where the vertices are labelled in the natural way; the only part that requires additional verification is the proof of \cite[Proposition 4.1 (i)]{guay}, where in the original proof the intermediate step is taken of reducing using elements of $Ker(X)$ to express $X_{G|_{V_1 \sqcup V_2}}$ as a linear combination of graphs with vertex set $V_1 \sqcup V_2$ whose edges between $V_1$ and $V_2$ form a matching; but it is easy to check that this process only uses elements of $Ker(X)$ that are also present as elements of $Ker(X_2)$ when viewing the graphs as $2$-vertex-labelled.

    \item We lift the above to a convex combination of $3$-vertex-labelled graphs and apply Corollary \ref{cor:homog} and Corollary \ref{cor:proj} to find that the analogous relation holds for the overall graph $G$.
    
    \item We define $H_k$ as the graph formed by taking $G$ and replacing $G|_{V_1 \sqcup V_2}$ by $G_k$. In the above steps, we have shown that the chromatic symmetric function of $G$ may be written as a linear combination of the chromatic symmetric functions of the $H_k$. We now show that the $H_k$ are incomparability graphs of $(3+1)$-free posets.
    
    \begin{lemma}\label{lem:3+1}
    
    Let $G$ be the incomparability graph of a $(3+1)$-free poset $P$, and define $H_k$ as above. Then $H_k$ is also the incomparability graph of a $(3+1)$-free poset.
    
    \end{lemma}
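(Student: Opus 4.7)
My plan is to construct an explicit partial order $<_{P_k}$ on $V(P)$ whose incomparability graph is $H_k$ and which is $(3+1)$-free. Define $<_{P_k}$ as follows: for any pair of vertices $u, v$ that is not of the form $(v_i, w_j) \in V_1 \times V_2$, set $u <_{P_k} v$ if and only if $u <_P v$; for $v_i \in V_1$ and $w_j \in V_2$, set $v_i <_{P_k} w_j$ when $i > k$ and declare $v_i, w_j$ incomparable when $i \leq k$. Granting that $P_k := (V(P), <_{P_k})$ is a partial order, its incomparability graph is easily seen to be $H_k$: inside $V_1$ and $V_2$ the antichain structure gives the clique edges of $H_k$; across $V_1$ and $V_2$ the new relations exactly realize $G_k$; and all other adjacencies coincide with those of $G$.

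The driving observation is a \emph{propagation lemma}: because the induced $C_4$ used to select $V_1, V_2$ produces at least one pair $v_l \in V_1, w_m \in V_2$ with $v_l <_P w_m$ (namely the pairs $(a,c)$ and $(b,d)$ from Lemma \ref{lem:sqcon}), and because $V_1$ and $V_2$ are homogeneous in $P$, for every $u \in V(P) \setminus (V_1 \cup V_2)$ we have that $u <_P V_1$ implies $u <_P V_2$, and $V_2 <_P u$ implies $V_1 <_P u$. In addition, by property (e) of Lemma \ref{lem:sqcon} there are no pairs $w_j <_P v_i$ in $P$, and $V_1$, $V_2$ are antichains.

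Transitivity of $<_{P_k}$ follows by a case analysis on where $x, y, z$ lie in a would-be chain $x <_{P_k} y <_{P_k} z$. Chains built entirely from inherited relations transfer from $P$, except perhaps when $(x, z) \in V_1 \times V_2$ is a crossing pair with $x = v_i$ and $i \leq k$; but then $x <_P y <_P z$ together with the antichain structure of $V_1, V_2$ forces $y \notin V_1 \cup V_2$, giving $V_1 <_P y <_P V_2$ in $P$, hence $V_1 <_P V_2$ entirely, which contradicts the presence of the induced $C_4$ (whose non-edges $bc, ad$ require crossings to be incomparable in $P$). The remaining cases — those involving a newly-declared $v_i <_{P_k} w_j$ with $i > k$ — are each settled by the propagation lemma: for instance, if $y = v_i$ with $i > k$ and $z = w_j$, then $x <_{P_k} y$ forces $x <_P V_1$ (when $x$ is outside, by homogeneity), whence $x <_P V_2$ and $x <_{P_k} z$; and the symmetric chain $v_i <_{P_k} w_j <_{P_k} z$ with $z$ outside uses $V_2 <_P z \Rightarrow V_1 <_P z$ to give $v_i <_P z$.

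For $(3+1)$-freeness, suppose for contradiction $P_k$ contains an induced $(3+1)$ with chain $a <_{P_k} b <_{P_k} c$ and isolated $d$. Because the only $<_{P_k}$-comparabilities inside $V_1 \cup V_2$ run $V_1 \to V_2$ between antichains, no chain of length three sits inside $V_1 \cup V_2$, so at least one of $a, b, c$ lies outside. Enumerate by how many of $a, b, c, d$ lie in $V_1 \cup V_2$; in each case the propagation lemma and the witness pair $v_l <_P w_m$ let me construct an explicit chain in $P$ of the form $v_l <_P w_m <_P \alpha$ or $\alpha <_P v_l <_P w_m$, together with a vertex incomparable to all three in $P$ (either $d$ itself when $d \notin V_1 \cup V_2$, using that incomparability of $d$ with vertices of $V_i$ in $P_k$ transfers to incomparability with all of $V_i$ in $P$ by homogeneity; or, when $d \in V_1 \cup V_2$, an appropriate substitute forced by the hypotheses on $a, b, c$). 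Each case yields an induced $(3+1)$ in $P$, contradicting the $(3+1)$-freeness of $P$. The main obstacle is the volume and bookkeeping of this case analysis, especially when $d \in V_1 \cup V_2$: one must repeatedly invoke the propagation lemma to either rescue an incomparable fourth element from outside $V_1 \cup V_2$ or to derive a contradiction with $V_1, V_2$ being antichains and with the forbidden direction $V_2 <_P V_1$.
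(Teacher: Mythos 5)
Your construction is exactly the paper's: you build the modified order ($P_k$ is the paper's $Q$), and you verify the same two things, transitivity and $(3+1)$-freeness, using the same tools (the $V_i$ are antichains, property (e) orients all crossing comparabilities from $V_1$ to $V_2$, homogeneity makes every outside vertex relate uniformly to each $V_i$, and the original $C_4$ supplies witness pairs). Your transitivity argument is correct and essentially complete; in particular the observation that a removed crossing relation $x <_P z$ would force $V_1 <_P y <_P V_2$ and hence contradict the incomparable pairs of the $C_4$ is exactly the right point, as is the ``propagation'' step for the newly declared relations.

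The gap is in the $(3+1)$-freeness half, which you explicitly defer as ``bookkeeping'' and for which your stated template does not actually cover all cases. You propose to always manufacture a chain in $P$ of the form $v_l <_P w_m <_P \alpha$ or $\alpha <_P v_l <_P w_m$ using the \emph{comparable} witness pair. But when the isolated element of the putative $(3+1)$ is the vertex in $V_1$ (or $V_2$) and the other special vertex lies in the opposite clique on the three-chain, the crossing pair you must replace is \emph{incomparable} in $P_k$, so you need an incomparable witness pair such as $a$ and $d$ (which the $C_4$ provides), not a chain through $v_l <_P w_m$; and in the case $|X \cap (V_1 \sqcup V_2)| = 3$, two of the four vertices lie in the same $V_i$, and the contradiction comes directly from the outside chain vertex relating identically to both of them (so no $(3+1)$ can exist), with no chain in $P$ to build at all. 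The paper closes both cases uniformly with one substitution: replace the $V_1$-vertex by $a$ and the $V_2$-vertex by $y' \in \{c,d\}$ chosen so that $a <_P y'$ if and only if the original crossing pair is comparable in $P_k$; homogeneity guarantees all other relations are preserved, transferring the $(3+1)$ to $P$. You should either adopt that uniform substitution or explicitly split your case analysis according to whether the crossing pair inside $X$ is comparable or incomparable in $P_k$, providing the appropriate witness pair in each case; as written, the argument for the second half is not yet a proof.
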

    
    \begin{proof}
    
     First we verify that $H_k$ remains an incomparability graph. Let $Q = (V,<_Q)$ have the same vertex set as $P$ formed by letting $<_Q$ be an asymmetric relation on vertex pairs that is the same as $<_P$, except that in $V_1 \sqcup V_2$, we remove all relations between vertex pairs that are now edges in the graph $H_k$, and for all nonedges $xy$ with $x \in V_1$ and $y \in V_2$, we let $x <_Q y$. 
     
     First, we verify that $Q$ is a poset. Clearly $<_Q$ is asymmetric and reflexive if $<_P$ is, so we only need to verify that transitivity holds. Suppose otherwise, that we have vertices $x,y,z$ such that $x <_Q y <_Q z$ but $x \nless_Q z$. This cannot happen if all three vertices lie in $V_1 \sqcup V_2$ since there is no chain $x <_Q y <_Q z$ of three elements. Likewise, if zero or one of the vertices lie in $V_1 \sqcup V_2$, or if two of the vertices lie in the same $V_i$, since then no relations among these vertices have changed from $P$ to $Q$, contradicting that $P$ is a poset. 
     
     Thus, we may assume that among $\{x,y,z\}$, there is one vertex in $V_1$, one in $V_2$, and one outside of $V_1 \sqcup V_2$; call these $u,v,w$ respectively (so $\{u,v,w\} = \{x,y,z\}$). As before, note that if $u <_Q w$, then also $u' <_Q w$ for every $u' \in V_1$ by homogeneity (property (f) of $(V_1,V_2)$) and the fact that vertices of $V_1$ are pairwise incomparable (property (b)), and analogously for other relationships between $w$ and either $u$ or $v$. Note that all relations involving $w$ are unchanged between $P$ and $Q$.
     
    Furthermore, in $P$, we have $a,b \in V_1$ and $c \in V_2$ such that $a <_P c$ and $b \nless_P c$, so we may find vertices $u' \in V_1$ and $v' \in V_2$ such that all pairwise relations between $u',v',w$ hold in $P$ if and only if the corresponding relations hold in $Q$. But then if $\{u,v,w\}$ violate transitivity in $Q$, $\{u',v',w\}$ violates transitivity in $P$, contradicting that $P$ is a poset.

     Thus, this newly formed $Q$ is a poset, and $H_k$ is its incomparability graph.
    
    

    It remains to show that $Q$ is $(3+1)$-free. Suppose otherwise, that there is $X \subseteq V$ with $|X| = 4$ such that $Q|_X$ is an induced $(3+1)$. Clearly $|X \cap (V_1 \sqcup V_2)|$ is not equal to $0,1,$ or $4$, since the original poset $P$ was $(3+1)$-free, and $Q|_{V_1 \sqcup V_2}$ is now $(3+1)$-free.
    
    Suppose first that $|X \cap (V_1 \sqcup V_2)| = 3$. Then some two vertices $v_1, v_2 \in X \cap (V_1 \sqcup V_2)$ lie in the same $V_i$. Since every vertex outside $V_1 \sqcup V_2$ is either less than both, greater than both, or adjacent to both of $v_1,v_2$, and in each case we may verify that $Q|_X$ is not an induced $(3+1)$.
    
    It remains to check the case when $|X \cap (V_1 \sqcup V_2)| = 2$. Using the above reasoning, among the two vertices of this intersection, there is one in each $V_i$. Thus, let $x \in X \cap V_1, y \in X \cap V_2$, and $v,w \in X \bk \{V_1 \sqcup V_2\}$ be given. We suppose for the sake of contradiction there is an induced $(3+1)$ formed by these vertices. 
    
    We proceed similarly to the above argument for proving the transitivity of $<_Q$. Since $a, b \in V_1$ and $c \in V_2$ such that $a <_P b$ and $a \nless_P c$ using the vertices of the $C_4$, we may choose $y' \in V_2$ such that $a <_P y'$ if and only if $x <_Q y$. All other relations in $Q$ amongst the vertices of $X$ are identical to those in $P$ since all such pairs have at least one vertex outside of $V_1 \sqcup V_2$. But then if $X$ is an induced $(3+1)$ in $Q$, then $(X \bk \{x,y\}) \cup \{a,y'\}$ is an induced $(3+1)$ in $P$, contradicting that $P$ is $(3+1)$-free.
    
    Thus, it follows that $Q$ remains $(3+1)$-free.
    
    \end{proof}
    
    The above proof demonstrates how homogeneous pairs are useful for preserving forbidden induced subgraphs, and similar arguments were used in \cite{cemil, perfect}.
    
    \item We prove that no new induced $C_4$ is introduced in $H_k$, so that repeatedly applying this process to the resulting graphs eventually terminates.
    
    \begin{lemma}\label{lem:2+2}
    Let $G,P,H_k,Q$ be as in Lemma \ref{lem:3+1}. Then $Q$ has strictly fewer induced $(2+2)$ posets than $P$.
    \end{lemma}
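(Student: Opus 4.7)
My plan is to stratify 4-element subsets $X\subseteq V(P)$ by the pair $(|X\cap V_1|,|X\cap V_2|)$ and compare induced-$(2+2)$ counts case by case. Because $P$ and $Q$ differ only on relations with one endpoint in $V_1$ and the other in $V_2$, any $X$ with $|X\cap V_1|=0$ or $|X\cap V_2|=0$ satisfies $P|_X=Q|_X$ and contributes identically to both counts. If $|X\cap V_i|\ge 3$ for some $i$, the antichain property of $V_i$ together with homogeneity from Lemma~\ref{lem:sqcon}(f) forces the remaining vertices to have uniform relations to the three-element antichain, precluding any $(2+2)$ in either poset. Thus the only active cases are $(|X\cap V_1|,|X\cap V_2|)\in\{(1,1),(1,2),(2,1),(2,2)\}$.

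The decisive case is $(2,2)$, i.e., $X\subseteq V_1\sqcup V_2$. Writing $A=\{v_1,\dots,v_k\}\subseteq V_1$ (incomparable to $V_2$ in $Q$) and $B=V_1\setminus A$ (strictly below $V_2$ in $Q$), I would enumerate the three sub-cases according to how $X\cap V_1$ meets $A$ and $B$: these yield respectively an antichain, a ``cherry-plus-isolated-vertex'' configuration, or a $K_{2,2}$-style two-level poset with four comparabilities — none of which is $(2+2)$. Hence $Q|_{V_1\sqcup V_2}$ contains no induced $(2+2)$ at all, while $\{a,b,c,d\}$ from Lemma~\ref{lem:sqcon} is an induced $(2+2)$ in $P$, already producing a unit strict decrease from this case alone.

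For the mixed cases $(1,1), (1,2), (2,1)$, I would fix the ``environment'' $X_0 = X\setminus(V_1\sqcup V_2)$ together with its induced sub-poset and the homogeneous types of its vertices with respect to $V_1$ and $V_2$ — all of which coincide in $P$ and $Q$. Whether $X$ is an induced $(2+2)$ then reduces to the cross-pattern of $V_1\times V_2$-relations within $X\cap V_1$ and $X\cap V_2$. The $(2+2)$ on $\{a,b,c,d\}$ realizes both ``comparable'' and ``incomparable'' cross-patterns in $P$, and I would define, for each mixed $(2+2)$ in $Q$, a corresponding mixed $(2+2)$ in $P$ by substituting the $V_1$- and $V_2$-vertices of $X$ with elements of $\{a,b\}\cup\{c,d\}$ whose $P$-relations reproduce the cross-pattern used in $Q$, then verify injectivity environment-by-environment. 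The main obstacle is the global bookkeeping of this substitution: the rigid block bipartite structure in $Q$ generates many parallel mixed $(2+2)$'s from the uniform comparabilities of $B\times V_2$, whose preimages in $P$ must be distinguished carefully using the square-connectedness of $(V_1,V_2)$ provided by Lemma~\ref{lem:sqcon}(c),(d); expressing both counts as polynomial functions of the bipartite adjacency data and exploiting that the $P$-side includes both orientations realized by $\{a,b,c,d\}$ should allow us to produce the required injection and conclude $N_P>N_Q$.
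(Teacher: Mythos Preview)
Your stratification by $(|X\cap V_1|,|X\cap V_2|)$ is sound, and the cases $(1,2)$, $(2,1)$, and $(2,2)$ are handled correctly (indeed $Q|_{V_1\sqcup V_2}$ has no induced $(2+2)$, and the three-in cases have none on either side by homogeneity). The difficulty, as you recognize, is entirely in the $(1,1)$ case, and here your plan has a genuine gap.

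Your proposed injection substitutes the $V_1,V_2$-vertices of a $(2+2)$ in $Q$ by elements of $\{a,b\}\cup\{c,d\}$ matching the required cross-relation. But for a fixed environment $(v,w)$, the number of $(2+2)$'s in $Q$ with that environment is either $(m-k)n$ or $kn$ depending on the sub-case, while your substitution collapses them all to a single target in $P$. You note this obstacle yourself, and propose to repair it with square-connectedness and a polynomial count comparison; however, nothing in properties (c),(d) alone bounds the $Q$-side count below the $P$-side count environment-by-environment, and there is no evident global cancellation. The sketch does not close this.

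The paper sidesteps the counting entirely by proving a stronger statement: \emph{no} $(2+2)$ in $Q$ of type $(1,1)$ exists. The key input you are missing is the \emph{maximality} of $(V_1,V_2)$ in Lemma~\ref{lem:sqcon}. If $\{x,y,v,w\}$ were a $(2+2)$ in $Q$ with $x\in V_1$, $y\in V_2$, one checks (using homogeneity to propagate the relations of $v,w$ from $\{x,y\}$ to all of $V_1\sqcup V_2$, and property (e) to rule out the wrong comparabilities) that $(V_1\cup\{v\},V_2\cup\{w\})$ would still satisfy (a)--(e) in $P$, contradicting maximality. With this in hand, the set of induced $(2+2)$'s in $Q$ is literally a subset of those in $P$, and the strict drop comes from $\{a,b,c,d\}$. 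Replace your injection with this maximality argument and the proof goes through cleanly.
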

    
    \begin{proof}
    
    Since $\{a,b,c,d\}$ now do not form an induced $(2+2)$ in $Q$, it suffices to show that there is no $X \subseteq V$ with $|V| = 4$ such that $Q|_X$ is an induced $(2+2)$ but $P|_X$ is not. As above, clearly $|X \cap (V_1 \sqcup V_2)| \in \{2,3\}$.
    
    If $|X \cap (V_1 \sqcup V_2)| = 3$, then there is some $V_i$ such that $|X \cap V_i| \geq 2$. But then the vertex of $X \bk (V_1 \sqcup V_2)$ is smaller than, larger than, or incomparable with all vertices in $X \cap V_i$, and it is easy to check that then $Q|_X$ is not an induced $(2+2)$.
    
    It remains to check the case when $|X \cap (V_1 \sqcup V_2)| = 2$. By the argument above, we may assume that $X$ contains $x \in V_1, y \in V_2$, and $v,w \in V \bk \{V_1 \sqcup V_2\}$. We check two cases:
    
    \begin{itemize}
        \item Case 1: $x <_Q y$. Without loss of generality suppose that $v <_Q w$, and $v$ and $w$ are incomparable with $x$ and $y$. By homogeneity it follows that $v$ and $w$ are then incomparable with all of $V_1 \sqcup V_2$. Then consider $(V_1 \cup \{v\}, V_2 \cup \{w\})$ in $P$. Clearly this pair satisfies properties (a), (b), and (e) of Lemma \ref{lem:sqcon}, and it is easy to check that (c) and (d) are satisfied as well as in the proof of Lemma \ref{lem:sqcon}. But this violates the maximality of $(V_1,V_2)$, a contradiction.
        \item Case 2: $x$ and $y$ are incomparable, and thus $v$ and $w$ are incomparable. Suppose first that $w <_Q x$. Then $w$ is incomparable with $y$ and also $w <_P x$. By property (c) of Lemma \ref{lem:sqcon} we can find $y' \in V_2$ such that $x <_P y'$, while by homogeneity $w$ is incomparable with $y'$, but this contradicts $w <_P x <_P y'$.
        
        Analogous arguments show that none of $v <_Q x$, $v >_Q y$, or $w >_Q y$ hold.
        
        So, we may assume that $v <_Q y$ and $w >_Q x$. Then as in the previous part, we may check that the pair $(V_1 \cup \{v\}, V_2 \cup \{x\})$ satisfies all properties other than (f) of Lemma \ref{lem:sqcon}, contradicting the maximality of $(V_1, V_2)$.
    \end{itemize}
    \end{proof}
    
    Thus, each $H_k$ has strictly fewer induced $C_4$s than $G$, and so by repeatedly applying these steps we eventually write $G$ as a convex combination of incomparability graphs of $(3+1)$- and $(2+2)$-free posets.
    
    \end{enumerate}

\section{Further Directions}

The approach outlined at the end of the previous section could be applied to a number of different problems in the theory of chromatic symmetric functions.

First, it seems plausible that the proof above in Lemma \ref{lem:sqcon} (that each $C_4$ in the incomparability graph of a $(3+1)$-free poset can be extended to homogeneous pair of cliques) can work in a larger class of graphs, thus giving more examples of $e$-positive graphs when combined with Guay-Paquet's argument in \cite{guay}. For instance, it is not the case that each $C_4$ in a claw-free graph necessarily extends to a homogeneous pair of cliques, but it may be true upon adding a much smaller number of forbidden induced subgraphs than are necessary for the large class of all incomparability graphs.

Second, expanding a graph's chromatic symmetric function into the $e$-basis is equivalent to writing it as a linear combination of graphs which are disjoint unions of cliques. Such graphs are also precisely the set of all graphs that have no induced three-vertex path as noted previously. Perhaps it is possible to find some explicit way of determining $e$-basis coefficients by determining a modifiable structure in unit interval graphs with such an induced subgraph.

Third, the original purpose of studying $Ker(X)$ more closely is to make progress on the tree isomorphism conjecture \cite{stanley}, which purports that if $T$ and $T'$ are nonisomorphic trees then $X_T \neq X_{T'}$. In fact, to the best of the authors' knowledge it is not known whether there are bipartite graphs with equal chromatic symmetric function; bipartite graphs may be particularly natural to view through the lens of chromatic multisymmetric functions.

\bibliographystyle{plain}
\bibliography{bib}

\begin{thebibliography}{10}

\bibitem{abreu}
Alex Abreu and Antonio Nigro.
\newblock Chromatic symmetric functions from the modular law.
\newblock {\em Journal of Combinatorial Theory, Series A}, 180:105407, 2021.

\bibitem{per2022}
Per Alexandersson and Robin Sulzgruber.
\newblock A combinatorial expansion of vertical-strip {{LLT}} polynomials in
  the basis of elementary symmetric functions.
\newblock {\em Advances in Mathematics}, 400:108256, 2022.

\bibitem{trees}
Jos{\'e} Aliste-Prieto, Anna de~Mier, and Jos{\'e} Zamora.
\newblock On trees with the same restricted {{U}}-polynomial and the
  {{P}}rouhet--{{T}}arry--{{E}}scott problem.
\newblock {\em Discrete Mathematics}, 340(6):1435--1441, 2017.

\bibitem{cho2022}
Soojin Cho and Jaehyun Hong.
\newblock Positivity of chromatic symmetric functions associated with
  {{H}}essenberg functions of bounce number 3.
\newblock {\em The Electronic Journal of Combinatorics}, 29:P2.19, 2022.

\bibitem{huh}
Soojin Cho and JiSun Huh.
\newblock On e-positivity and e-unimodality of chromatic quasi-symmetric
  functions.
\newblock {\em SIAM Journal on Discrete Mathematics}, 33(4):2286--2315, 2019.

\bibitem{cemil}
Maria Chudnovsky and Cemil Dibek.
\newblock Strongly perfect claw-free graphs—a short proof.
\newblock {\em Journal of Graph Theory}, 97(3):359--381, 2021.

\bibitem{perfect}
Maria Chudnovsky, Neil Robertson, Paul Seymour, and Robin Thomas.
\newblock The strong perfect graph theorem.
\newblock {\em Annals of mathematics}, pages 51--229, 2006.

\bibitem{chud}
Maria Chudnovsky and Paul~D Seymour.
\newblock The structure of claw-free graphs.
\newblock {\em Surveys in Combinatorics}, Ed. Bridget Webb, 2005.

\bibitem{colm}
Laura Colmenarejo, Alejandro~H Morales, and Greta Panova.
\newblock Chromatic symmetric functions of {{D}}yck paths and q-rook theory.
\newblock {\em arXiv preprint arXiv:2104.07599}, 2021.

\bibitem{delcon}
Logan Crew and Sophie Spirkl.
\newblock A deletion--contraction relation for the chromatic symmetric
  function.
\newblock {\em European Journal of Combinatorics}, 89:103143, 2020.

\bibitem{modular}
Logan Crew and Sophie Spirkl.
\newblock Modular relations of the {{T}}utte symmetric function.
\newblock {\em Journal of Combinatorial Theory, Series A}, 187:105572, 2022.

\bibitem{dahlberg2019}
Samantha Dahlberg.
\newblock A new formula for {{S}}tanley’s chromatic symmetric function for
  unit interval graphs and e-positivity for triangular ladder graphs.
\newblock {\em S{\'e}minaire Lotharingien Combinatoire}, 82, 2019.

\bibitem{epos}
Samantha Dahlberg, Adrian She, and Stephanie van Willigenburg.
\newblock Schur and $ e $-positivity of trees and cut vertices.
\newblock {\em Electronic Journal of Combinatorics}, 27:P1.2, 2020.

\bibitem{dahl}
Samantha Dahlberg and Stephanie van Willigenburg.
\newblock Lollipop and lariat symmetric functions.
\newblock {\em SIAM Journal on Discrete Mathematics}, 32(2):1029--1039, 2018.

\bibitem{dalbec}
John Dalbec.
\newblock Multisymmetric functions.
\newblock {\em Beitr{\"a}ge zur Algebra und Geometrie}, 40(1):27--51, 1999.

\bibitem{foley}
Ang{\`e}le~M Foley, Ch{\'\i}nh~T Ho{\`a}ng, and Owen~D Merkel.
\newblock Classes of graphs with e-positive chromatic symmetric function.
\newblock {\em Electronic Journal of Combinatorics}, 26:P3.51, 2019.

\bibitem{guay}
Mathieu Guay-Paquet.
\newblock A modular relation for the chromatic symmetric functions of
  $(3+1)$-free posets.
\newblock {\em arXiv preprint arXiv:1306.2400}, 2013.

\bibitem{clawposet}
Mathieu Guay-Paquet, Alejandro~H Morales, and Eric Rowland.
\newblock Structure and enumeration of $(3+1)$-free posets.
\newblock {\em Annals of Combinatorics}, 18(4):645--674, 2014.

\bibitem{heil}
Sam Heil and Caleb Ji.
\newblock On an algorithm for comparing the chromatic symmetric functions of
  trees.
\newblock {\em Australasian Journal of Combinatorics}, 75(2):210--222, 2019.

\bibitem{mac}
Ian~G Macdonald.
\newblock {\em Symmetric functions and {{H}}all polynomials}.
\newblock Oxford {{U}}niversity {{P}}ress, 1998.

\bibitem{matherne}
Jacob~P Matherne, Alejandro~H Morales, and Jesse Selover.
\newblock The {{N}}ewton polytope and {{L}}orentzian property of chromatic
  symmetric functions.
\newblock {\em arXiv preprint arXiv:2201.07333}, 2022.

\bibitem{ore}
Rosa Orellana and Geoffrey Scott.
\newblock Graphs with equal chromatic symmetric functions.
\newblock {\em Discrete Mathematics}, 320:1--14, 2014.

\bibitem{pau}
Alexander Paunov.
\newblock Planar graphs and {{S}}tanley's chromatic functions.
\newblock {\em arXiv preprint arXiv:1702.05787}, 2017.

\bibitem{paw}
Brendan Pawlowski.
\newblock Chromatic symmetric functions via the group algebra of ${{S}}_n$.
\newblock {\em Algebraic Combinatorics}, 5(1):1--20, 2022.

\bibitem{raul}
Ra\'{u}l Penagui\~{a}o.
\newblock The kernel of chromatic quasisymmetric functions on graphs and
  hypergraphic polytopes.
\newblock {\em Journal of Combinatorial Theory, Series A}, 175:105258, 2020.

\bibitem{stanley}
Richard~P Stanley.
\newblock A symmetric function generalization of the chromatic polynomial of a
  graph.
\newblock {\em Advances in Mathematics}, 111(1):166--194, 1995.

\bibitem{stanleybook}
Richard~P Stanley and S~Fomin.
\newblock Enumerative combinatorics. vol. 2.
\newblock {\em Cambridge Studies in Advanced Mathematics, Volume 62}, 1999.

\bibitem{vaccarino2005ring}
Francesco Vaccarino.
\newblock The ring of multisymmetric functions.
\newblock In {\em Annales de l'institut Fourier}, volume~55, pages 717--731,
  2005.

\bibitem{wang}
David~GL Wang and Monica~MY Wang.
\newblock A combinatorial formula for the {{S}}chur coefficients of chromatic
  symmetric functions.
\newblock {\em Discrete Applied Mathematics}, 285:621--630, 2020.

\end{thebibliography}

\end{document}